\definecolor{blau}{rgb}{0,0,0.75} %colour for in-document links
\newtheorem{theorem}{Theorem}
\newtheorem{lemma}{Lemma}
\newtheorem{prop}{Proposition}
\theoremstyle{definition}
\newtheorem{remark}{Remark}
\newcommand{\JAP}{\emph{Journal of Applied Probability}}
\newcommand{\PTRF}{\emph{Probability Theory and Related Fields}}
\newcommand{\mom}{\text{model}\,\ensuremath{\mathcal{M}}}
\newcommand{\mor}{\text{model}\,\ensuremath{\mathcal{R}}}
\newcommand{\momp}{\text{model}\,\ensuremath{\mathcal{M}}\,}
\newcommand{\morp}{\text{model}\,\ensuremath{\mathcal{R}}\,}
\newcommand{\ith}[1]{\ensuremath{ {#1}}th}
\newcommand{\fallfak}[2]{\ensuremath{#1^{\underline{#2}}}}
\newcommand{\Stir}[2]{\genfrac{ \{ }{ \} }{0pt}{}{#1}{#2}}
\newcommand{\stir}[2]{\genfrac{ [ }{ ] }{0pt}{}{#1}{#2}}
\newcommand{\calW}{\ensuremath{\mathcal{W}}}
\newcommand{\W}{\ensuremath{\tilde{W}}}
\newcommand{\N}{\ensuremath{\mathbb{N}}}
\newcommand{\Gro}{\ensuremath{\mathcal{O}}}
\def\P{{\mathbb {P}}}
\def\E{{\mathbb {E}}}
\def\V{{\mathbb {V}}}
\newcommand{\law}{\ensuremath{\stackrel{\mathcal{L}}=}}
\newcommand{\claw}{\ensuremath{\xrightarrow{\mathcal{L}}}}
\newcommand{\as}{\ensuremath{\xrightarrow{(a.s.)}}}
\newcommand\backward{\nabla}
\newcommand\field{\mathbb{F}}
\newcommand\given{\, \vert \, }
\newcommand\matM{{\bf M}}
\newcommand\Polya{P\' olya}
\author[M.~Kuba]{Markus Kuba}
\address{Markus Kuba\\
Institute of Applied Mathematics and Natural Sciences\\
University of Applied Sciences-Technikum Wien\\
H\"ochst\"adtplatz 5, 1200 Wien} 
\email{kuba@technikum-wien.at}
\author[H.~M.~Mahmoud]{Hosam M.~Mahmoud}
\address{Hosam M.~Mahmoud\\
Department of Statistics\\
The George Washington University, Washington, D.C. 20052, U.S.A.}
\email{hosam@gwu.edu}
\title[Affine urn models II]{Two-color balanced affine urn models with multiple drawings~II: large-index and triangular urns}
\keywords{Urn model, random structure, martingale, variance, central limit theorem, 
large-index urns, triangular urns}%
\subjclass[2000]{60C05, 60F05, 60G42}        %combinatorial probability
\begin{document}
\begin{abstract}
This is the second part of a two-part investigation. We continue the study of a class of balanced urn schemes on balls of two colors (white and black).
At each drawing, a sample of size $m\ge 1$ is drawn from the urn and ball addition rules are applied;
% rearranged the sentence for better readability 
the special case $m=1$ of sampling only a single ball coincides with ordinary balanced urn models. We consider these multiple drawings under sampling with or without replacement. For the class of  affine conditional expected value, we study the number of white balls after $n$ steps.  The affine class is parametrized by $\Lambda$, specified by the ratio of the two eigenvalues of a reduced ball replacement matrix and the sample size, leading to three different cases: small-index urns ($\Lambda \le \frac 1 2$, and the case $\Lambda= \frac 1 2$ is critical), large-index urns ($\Lambda > \frac 1 2$), and triangular urns. In Part~I we derived a central limit theorem for small index urns, and proved almost-sure convergence for large index and triangular urn models.
In the present paper (Part II), we continue the study of affiance urn schemes and study the moments of large-index urns and triangular urn models. We show moment convergence under suitable scaling and we also provide expressions for the moments.
\end{abstract}

\date{\today}
\maketitle
\section{Introduction}
Urn schemes are simple, yet versatile mathematical tools for modeling 
evolutionary processes. This is the second part of a two-part investigation of affine urn schemes.  
We discussed motivation and application in the first part; we refer the interested
reader to the introduction of that companion paper.
This investigation is devoted to the study of a generalization of the two-color
\Polya\ urn model, 
where \emph{multiple} balls are drawn at each discrete time step, 
their colors are inspected, then the entire sample is placed back in 
the urn, and rules of replacement are affected.
Such urn schemes with multiple drawing 
recently received attention~\cite{ChenWei,ChenKu2013+,JohnsonKotzMahmoud2004,KuMaPan2013+,Mah2012,Moler,Renlund,TsukijiMahmoud2001}. 
The addition/removal of balls depends on the combinations of colors in the multiset drawn. 
We suppose the two colors are white (W)
and black (B), and use the notation  $\{W^kB^{m-k}\}$
to refer to a sample of size $m$ containing $k$ white balls and $m-k$ black balls.
Specifically, we draw $m\ge 1$ balls and add/remove white and black balls 
according to the multiset $\{W^kB^{m-k}\}$ of observed colors:
If we draw $k$ white 
and $m-k$ black balls, we add $a_{m-k}$ white and $b_{m-k}$ black balls, $0\le k\le m$.
The ball replacement matrix of this urn model with multiple drawings is a rectangular $(m+1)\times 2$ matrix:
\begin{equation}
\label{MuliDrawsLinMatrix}
    \matM =
    \begin{pmatrix}
    a_0& b_0   \\
    a_1  & b_1\\
    \vdots&\vdots\\
    a_{m-1}   & b_{m-1}\\
    a_m&  b_m  \\
    \end{pmatrix}.
\end{equation}
Let $W_n$ and $B_n$ be respectively the number
of white and black urns in the urn after $n$ draws.
We assume throughout that the urn model is \emph{balanced}, 
a scheme in which one adds a constant number of balls 
$\sigma\ge 1$ each time, regardless of what multiset is withdrawn.
That is, $a_k+b_k=\sigma\ge 1$, $0\le k\le m$.
Consequently, the total number of balls $T_n=W_n+B_n$ after $n$ draws is given by a nonrandom number 
$T_n=T_0 + \sigma n$. We confine our attention to the
so-called tenable urn models, where the process of drawing and replacing balls can be continued ad infinitum.

\smallskip 

Urn models with multiple drawings and sample size $m\ge 2$ are in general more difficult to analyze compared to the ordinary case $m=1$, see~\cite{ChenWei,ChenKu2013+,Mah2012,Moler,Renlund} and the discussions therein. In Part I we carried out a structural analysis classifying urn models with multiple drawings according to the shape of the conditional expected value of the number of white balls $W_n$. In this work, as in Part I, we analyze urn schemes with multiple drawings for which the conditional expectation of the number of white balls $W_n$ after $n$ draws has an affine structure of the form
\begin{equation}
\label{MuliDrawsLinPropLinear}
\E\bigl[W_n \given \field_{n-1}\bigr]= \alpha_n W_{n-1} +\beta_n,\qquad n\ge 1.
\end{equation}
Here, $\alpha_n,\beta_n$ denote deterministic sequences certain sequences depending only on $n$, $a_{m-1}$, $a_m$, and the balance factor $\sigma$, and $\field_n$ 
denotes the sigma-algebra generated by the first $n$ draws from the urn. Note that in the case $m=1$ balanced urn models are by definition affine.
\subsection{Affine schemes}
In Part I we obtained a characterization of affine schemes. 
Basically, a scheme is {\it affine}, if and only if 
the entries in the first column satisfy the recurrence:
\begin{equation}
a_k=(m-k)(a_{m-1}-a_m)+a_m,\qquad \mbox {for \ } 0\le k\le m.
\label{Eq:ak}
\end{equation}
Equivalently, the condition can be written as 
$$a_k = hk + a_0, $$ 
for arbitrary $h$ respecting tenability.
The values $\alpha_n$ and $\beta_n$ in~\eqref{MuliDrawsLinPropLinear} are given by
\begin{equation}
\alpha_n=\frac{T_{n-1}+m(a_{m-1}-a_m)}{T_{n-1}},\qquad \beta_n=a_m,\quad n\ge 1.
\label{AffineAB}
\end{equation}
In view of the balance, the entries in the second column satisfy a similar recurrence. 

\subsection{Classification of limit laws}\label{Classification}
We also discussed in Part I a classification in terms of
the two eigenvalues $\Lambda_1$ and $\Lambda_2$ of the submatrix
$\begin{pmatrix}
a_{m-1} & b_{m-1}\\
a_m&b_m\\
\end{pmatrix}$ with $\Lambda_1$ being the larger of the two eigenvalues.\footnote{An equivalent formulation can be obtained from the first two rows of the replacement matrix; see part I and for a discussion of the higher dimensional condition.}  
The urn index is $$\Lambda = \Lambda_2/\Lambda_1=\frac{m}{\sigma}(a_{m-1}-a_m).$$ A trichotomy of cases similar to 
the classical case of sample size $m=1$ arises: 
(1) Urn schemes with $a_m\not =0$ and a small index. These have $\Lambda \le \frac12$, and the case $\Lambda=\frac 1 2$ is critical,
(2) Urn schemes with $a_m\not =0$ and a large index, $\Lambda > \frac12$,
(3) Triangular urn models with $a_m=0$.
The limiting distribution results for sample size $m=1$ and balanced urn models are well known and are, amongst others, based on the results of Bagci and Pal~\cite{Bagchi1985}, Janson~\cite{Jan2004,Jan2006}, Flajolet et al.~\cite{FlaDumPuy2006}, Chauvin et al.\cite{Chauvin1} and Neininger and Knape~\cite{NeiningerKnape}.
For a balanced two-color urn model $M	= \left(\begin{matrix} a_0 & b_0 \\ a_1 & b_1\end{matrix}\right)$ let $\Lambda$ denote the ratio of the two eigenvalues of $M$. 
For small-index urns, $\Lambda\le\frac12$, one obtains a central limit theorem for the number of white balls $\frac{W_n-\E(W_n)}{\sqrt{\V(W_n)}}\claw \mathcal{N}(0,1)$. For large-index urns, $\Lambda>\frac12$, and also triangular urns, $b_0\cdot a_1=0$, the suitably normalized (and centered) number of white balls converges almost surely to a non-normal limiting distribution.
Concerning the distribution of the limit law of large index urns there has been a \emph{flurry of activity} in the last decade. Several articles~\cite{Chauvin1,Chauvin2,Pou2008} have been entirely devoted to the study of large-index urns and the properties of the limit law such as characteristic function, moments, decomposition of random variables, fixed-point equations (smoothing transforms) etc.; see also~\cite{Jan2004,NeiningerKnape} for general studies including large-index urn models; moreover, triangular urn models and properties of its limit law, including moments and density functions, have been analyzed in~\cite{FlaDumPuy2006,Jan2006}.

\smallskip

We will extend the classification result above for $m=1$ to arbitrary affine urn models. In Part I we already discussed the case $\Lambda\le \frac12$ and proved a central limit theorem. Moreover, for $\Lambda>\frac12$ and for triangular urns almost-sure convergence of suitably defined random variables have been proved in part I using discrete martingales.
We continue our investigation analyzing the positive integer moments of the limit 
laws of large-index urns and triangular urns.
It turns out that the higher moments of the limiting distributions of large-index urn models and triangular urn models are given by nested infinite sums. 
Our results are valid for any $m\ge 1$, but in contrast to the case $m=1$ it appears that in general for $m\ge 2$ the nested sums cannot be simplified.

Note that concerning urn models with multiple drawings and replacement matrix $M$ as given by~\eqref{MuliDrawsLinMatrix} we call an urn model \emph{triangular} if $a_m=0$ or $b_0=0$ or both $a_m=b_0=0$. The case $b_0=0$ and $a_m>0$ for the black balls corresponds to the case $a_m=0$ and $b_0>0$ for the white balls; by the relation $B_n=T_n-W_n$ this implies that, without loss of generality, we can restrict our attention to triangular urns with $a_m=0$ and $b_0\ge 0$. If both $a_m=b_0=0$ one obtains the so-called P\'olya urn model as treated in~\cite{ChenKu2013+}. Setting $a_{m-1}=c>0$ triangular urns in the affine scheme with $a_m=0$ are specified by a rectangular $(m+1)\times 2$ matrix:
\begin{equation*}
   \matM =
    \begin{pmatrix}
    mc& \sigma-mc   \\
    (m-1)c  & \sigma-(m-1)c\\
    \hdots&\hdots\\
    c   & \sigma-c\\
    0&  \sigma  \\
    \end{pmatrix},
\end{equation*}
and the three parameters $a_{m-1}=c>0$, the sample size $m$ and the total balance $\sigma>0$ such that $\sigma\ge mc$. The special case $\sigma=mc$ corresponds to generalized P\'olya urn model, as discussed in~\cite{ChenKu2013+}. We will improve the results of~\cite{ChenKu2013+} by obtaining an explicit nonrecursive formula for the moments of $W_n$, as well as for the limiting distribution.

\subsection{Affine urn models}\label{AffineUrns}
A direct consequence of the affine expectation~\eqref{MuliDrawsLinPropLinear} is a martingale structure, and a closed form expression for the expected value and the second moment.
We collect the results relevant for this work in the following Lemma.
\begin{lemma}[\cite{KuMa201314}]
\label{Prop:caligraphicW}
The expected value of the random variable $W_{n}$, counting the number 
of white balls in a two-color balanced affine urn model with multiple drawings, is for both sampling models 
$\mathcal{M}$ and $\mathcal{R}$ given by $\E[W_{n}]=\frac{a_m}{g_n}\sum_{j=1}^{n}g_j +W_0\frac{1}{g_n}$. Here, we have
\begin{equation}
\label{ExpansionGn}
g_n=\prod_{j=0}^{n-1}\frac{T_j}{T_j+m(a_{m-1}-a_m)}=\frac{\binom{n-1+\frac{T_0}\sigma}{n}}{\binom{n-1+\frac{T_0}\sigma +\Lambda}{n}}=\frac{\Gamma(\frac{T_0}\sigma+\Lambda)}{\Gamma(\frac{T_0}\sigma)}n^{-\Lambda}\Big(1+O\Bigl(\frac1{n}\Bigr)\Bigr).
\end{equation}
For $a_m\neq 0$ and $\Lambda<1$, including large-index urns, we have 
\begin{align*}
\E[W_{n}]&= \frac{a_m(n+\frac{T_0}\sigma)}{1-\Lambda} + \Big(W_0-\frac{\frac{a_mT_0}\sigma}{1-\Lambda}\Big)\frac{\binom{n-1+\frac{T_0}\sigma + \Lambda}{n}}{\binom{n-1
+\frac{T_0}\sigma}{n}} \\
&= \frac{a_m }{1-\Lambda}\, n + \Big( W_0-\frac{\frac{a_mT_0}\sigma}{1-\Lambda}\Big) \frac{\Gamma(\frac{T_0}\sigma)}{\Gamma(\frac{T_0}\sigma+\Lambda)} \, n^{\Lambda} 
+ \Gro(1).
\end{align*}
For triangular urns with $a_m=0$ we have the closed form expression 
$$\E[W_{n}]=W_0\, \frac{n\sigma +T_0}{T_0}.$$

The random variable $\calW_n= g_n ( W_n- \E[W_n])$ is a centered martingale with respect to the natural filtration: $\E[\calW_n\given \field_{n-1}]=\calW_{n-1}$, $n\ge 1$, with $\mathcal{W}_{0}=0$.
For large-index urns, $\calW_n$ convergences almost surely and in $L_2$ to a limit $\calW_\infty$. For triangular urn models, where $a_m=0$, the random variable $\mathfrak{W}_n=g_n W_n$ is a nonnegative martingale and converges almost surely to a limit $\mathfrak{W}_\infty$.
\end{lemma}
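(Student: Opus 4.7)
The plan is to derive everything from the affine recurrence (\ref{MuliDrawsLinPropLinear}) with $\alpha_n,\beta_n$ as in (\ref{AffineAB}). Taking unconditional expectations yields the deterministic first-order recurrence $\E[W_n]=\alpha_n\E[W_{n-1}]+a_m$. The sequence $g_n=\prod_{j=0}^{n-1}(T_j/(T_j+m(a_{m-1}-a_m)))$ is exactly the integrating factor, since $g_n\alpha_n=g_{n-1}$. Multiplying the recurrence by $g_n$ telescopes to $g_n\E[W_n]=W_0+a_m\sum_{j=1}^{n}g_j$, which is the first asserted formula. Writing $T_j=\sigma(j+T_0/\sigma)$ turns the product defining $g_n$ into a ratio of Pochhammer symbols (equivalently, the claimed ratio of binomial coefficients or Gamma functions); the asymptotic $g_n\sim \Gamma(T_0/\sigma+\Lambda)/\Gamma(T_0/\sigma)\,n^{-\Lambda}$ then follows from the standard $\Gamma(n+a)/\Gamma(n+b)\sim n^{a-b}$ with an $O(1/n)$ correction.

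To get the closed form for $\E[W_n]$ when $\Lambda<1$, I would evaluate $\sum_{j=1}^{n}g_j$ by the summation-by-parts identity
\begin{equation*}
g_j T_j - g_{j-1} T_{j-1}=\sigma(1-\Lambda)\, g_j,
\end{equation*}
obtained by combining $g_j(T_{j-1}+m(a_{m-1}-a_m))=g_{j-1}T_{j-1}$ with $T_j=T_{j-1}+\sigma$. Telescoping produces $\sum_{j=1}^{n}g_j=(g_n T_n-T_0)/(\sigma(1-\Lambda))$, which inserted into the first formula gives the advertised decomposition into a linear principal part $a_m T_n/(\sigma(1-\Lambda))$ and a correction of order $1/g_n\sim n^{\Lambda}$. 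The triangular case $a_m=0$ is immediate: the non-homogeneous term disappears and $\E[W_n]=W_0/g_n$; in the balanced triangular/P\'olya setting this ratio telescopes to $T_n/T_0$, yielding the claimed closed form.

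For the martingale structure, subtracting $\E[W_n]=\alpha_n\E[W_{n-1}]+a_m$ from (\ref{MuliDrawsLinPropLinear}) gives $\E[W_n-\E[W_n]\mid \field_{n-1}]=\alpha_n(W_{n-1}-\E[W_{n-1}])$, and multiplying by $g_n$ with $g_n\alpha_n=g_{n-1}$ gives $\E[\calW_n\mid\field_{n-1}]=\calW_{n-1}$, with $\calW_0=0$. In the triangular case the same computation with $a_m=0$ shows $\mathfrak{W}_n=g_n W_n$ is a martingale, and it is nonnegative because $W_n\ge 0$ and $g_n>0$; Doob's nonnegative martingale convergence theorem gives the a.s.\ limit $\mathfrak{W}_\infty$.

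The main technical obstacle is establishing $L_2$-boundedness of $\calW_n$ in the large-index regime $\Lambda>\tfrac12$. I would compute $\E[\calW_n^2]-\E[\calW_{n-1}^2]=g_n^2\,\E[\V(W_n\mid\field_{n-1})]$ using the conditional affine recurrence, and then bound the conditional variance. The conditional variance is $O(W_{n-1})+O(1)=O(n)$ (a uniform bound since $W_{n-1}\le T_{n-1}=O(n)$), so the summand is $O(g_n^2\, n)=O(n^{1-2\Lambda})$; this is summable precisely when $2\Lambda>1$, i.e.\ in the large-index regime. Hence $\sup_n\E[\calW_n^2]<\infty$, so Doob's $L_2$-martingale convergence theorem yields the limit $\calW_\infty$ both almost surely and in $L_2$. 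The fact that the exponent $1-2\Lambda$ dictates the dichotomy between the small- and large-index regimes is precisely why the threshold $\Lambda=\tfrac12$ is critical; this is the same phase transition as in the classical $m=1$ theory.
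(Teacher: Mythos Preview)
The paper does not prove this lemma; it is quoted verbatim from Part~I of the series (reference~\cite{KuMa201314}) and serves here only as background. So there is no proof in the present paper to compare against, and your argument is a reconstruction of what Part~I presumably contains.

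Your integrating-factor derivation of $\E[W_n]$, the Gamma/binomial rewriting of $g_n$, the telescoping identity $g_jT_j-g_{j-1}T_{j-1}=\sigma(1-\Lambda)g_j$ for the closed form, and the martingale verification are all correct and are exactly the natural arguments.

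There is, however, a genuine error in your $L_2$-boundedness step. You bound the conditional variance by $O(W_{n-1})+O(1)=O(n)$ and then assert that $\sum g_n^2\cdot n=\sum O(n^{1-2\Lambda})$ is summable when $2\Lambda>1$. But $\sum_n n^{1-2\Lambda}$ converges only when $1-2\Lambda<-1$, i.e.\ when $\Lambda>1$, which never happens here. The repair is that the conditional variance is in fact $O(1)$, not $O(n)$: the increment $W_n-W_{n-1}=\sum_{k}a_{m-k}\,\mathbb{I}_n(W^kB^{m-k})$ is bounded in absolute value by $\max_k|a_{m-k}|$, a constant depending only on the replacement matrix. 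With that bound, $\E[\calW_n^2]-\E[\calW_{n-1}^2]=O(g_n^2)=O(n^{-2\Lambda})$, and $\sum_n n^{-2\Lambda}$ converges precisely when $\Lambda>\tfrac12$, recovering the large-index threshold for the right reason.

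A smaller point: in the triangular paragraph you write that $1/g_n$ ``telescopes to $T_n/T_0$''. That identity requires $m(a_{m-1}-a_m)=\sigma$, i.e.\ $\Lambda=1$, the P\'olya subcase. For a general triangular urn with $\Lambda<1$ one only gets $\E[W_n]=W_0/g_n$, not the simpler form $W_0T_n/T_0$; the lemma's stated closed form should be read with that restriction in mind.
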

\begin{remark}
\label{Complication}
A slight unpleasant complication is the case distinction for the expected value between $a_m\neq 0$ and $a_m=0$. The case $a_m=0$ leads to $\Lambda=\frac{m a_{m-1}}{\sigma}$, which may be equal to one for P\'olya urn models with $a_{m-1}=c$ and $\sigma=m c$. However, we can interpret the explicit formula $\E[W_n]$ stated for $a_{m}\neq 0$ and $\Lambda<1$ in the right way: 
We set {\it first} $a_m=0$, so all terms which include $\frac1{1-\Lambda}$ vanish, and only afterward we set $\Lambda$ to its corresponding value, including the case $\Lambda=1$. In other words, the quotient $\frac{a_m}{1-\Lambda}$ is zero for $a_m=0$, regardless of the value of $\Lambda \le 1$.
\end{remark}

\subsection{Plan of the paper and notation}
In Part I we obtained Gaussian limits for small-index urn schemes and almost-sure limits for triangular and large-index urns. 
It is our aim to complete the study of triangular and large-index urns. We study the (positive integer) moments $W_n^s$, $s\in\N$, of~$W_n$ and the moments of the limit laws $\calW_\infty$ for large-index urns and $\mathfrak{W}_\infty$ for triangular urns. We provide a recursive characterization for the moments of the limiting distribution. 
For triangular urns $a_m=0$ and $b_0\ge 0$, we generalize the existing results concerning the case $a_m=b_0=0$~(see~\cite{ChenKu2013+}) obtaining an explicit nonrecursive descriptions of the moments.

We denote by $\fallfak{x}{k}$ the $k$th falling factorial, $x(x-1)\dots (x-k+1)$, $k\ge 0$, with $\fallfak{x}0=1$.
We shall also use $\backward$, the backward difference operator, defined by $\backward h_n = h_n -h_{n-1}$, when acting on a function $h_n$.
We use $\Stir{s}{k}$ to denote the Stirling numbers of the second kind, and $\stir{s}{k}$ to denote the unsigned Stirling numbers of the first kind (see~\cite{Stanley} or~\cite{GraKnuPa});
these numbers appear as coefficients in the expansions
\begin{equation*}
x^s=\sum_{k=0}^{s}\Stir{s}k \fallfak{x}k,\qquad \fallfak{x}s=\sum_{k=0}^{s}(-1)^{s-k}\stir{s}k x^k,\qquad 
\end{equation*}
relating ordinary powers $x^s$ to the falling factorials $\fallfak{x}{s}$. 
Moreover, in this article we refer with $\mathcal{W}_\infty$ to the almost-sure limit of~$W_n$ for large-index urns with $\frac12<\Lambda<1$ and with $\mathfrak{W}_\infty$ 
to the almost-sure limit for 
triangular urns satisfying $a_m=0$ and $b_0\ge 0$.

\section{Preliminaries}
\label{Sec:Prelim}
\subsection{Sampling schemes}
\label{Sec:PrelimSampling}
Assume that an urn contains $w$ white and $b$ black balls. We consider two different sampling schemes for drawing the $m$ balls at each step: \mom\ and \mor. In \mom, we draw the $m$ balls without replacement. The $m$ balls are drawn at once and their colors are examined. 
After the sample is collected, we put the entire sample back in the urn and execute the replacement rules according to the counts of colors observed. 
The tenability assumption implies that for \mom\ the coefficients $a_k$ of the 
ball replacement matrix~\eqref{MuliDrawsLinMatrix} 
satisfy the condition 
$a_k\ge -(m-k)$,\footnote{These assumptions can be relaxed a little bit,
if the initial values $W_0$ and $B_0$ are adapted to the entries in the ball replacement matrix. E.g., for $m=1$ the urn model with ball replacement matrix
$\left(\begin{smallmatrix}-3 & 8\\
6& -4
\end{smallmatrix}\right)$ is still tenable if $W_0$ is a multiple of $3$ and $B_0$ a multiple of $4$.} for $0\le k\le m$. 
Without loss of generality we assume throughout this work that the initial number of balls $T_0=W_0+B_0\ge m$.

The probability $\P(W^kB^{m-k})$ of drawing $k$ white and $m-k$ black balls is given by 
$$\P(W^kB^{m-k})= \frac1{\fallfak{(b+w)}{m}}\binom{m}{k}\fallfak{w}{k}\, \fallfak{b}{m-k}=\frac{\binom{w}k\binom{b}{m-k}}{\binom{b+w}m},\qquad 0\le k\le m.$$
Thus $X$, the number of white balls in the sample,
follows a hypergeometric distribution,
with parameters $w+b, w$, and $m$, that is, 
one that counts the number of white balls in a sample of size~$m$ balls 
taken out of an urn containing $w$ white and $b$ black balls (a total of
$\tau = w+b$ balls).
The first two moments of $X$ are given by
$$\E[X]=m\frac w \tau,\qquad \E[X^2] = \frac{w(w-1)m(m-1)}{\tau(\tau-1)}
         + \frac{w m}\tau.$$
The $\ell$th moments of $X$ can be written as a polynomial in the parameter $w$:
$$ \E[X^{\ell}]=\sum_{k=0}^{m} k^\ell \frac{\binom{w}k\binom{\tau-w}{m-k}}{\binom{\tau}{m}}
=\sum_{i=0}^{\ell^{*}}w^i \sum_{j=i}^{\ell^{*}}(-1)^{j-i} \frac{\stir{j}i \Stir{\ell}{j}\binom{m}{j}}{\binom{\tau}{j}},$$
with $\ell^{*}=\min\{\ell,m\}$, for arbitrary $\ell\ge 0$. 

In \mor, we draw the $m$ balls with replacement. The $m$ balls are drawn one at a time. After a ball is drawn,  
its color is observed, and is reinserted in the urn, and thus 
it might reappear in the sampling of
one multiset. After $m$ balls are collected in this way (and they are all back in the urn),
we execute the replacement rules according to the counts of colors observed.
By the tenability assumption $a_k\ge -1$ for $0\le k\le m-1$ and $a_m\ge 0$ for \mor.

The probability $\P(W^kB^{m-k})$ of drawing $k$ 
white and $m-k$ black balls is given by 
$$\P(W^kB^{m-k}) = \frac1{(b+w)^m}\binom{m}{k}w^k \, b^{m-k},\qquad 0\le k\le m.$$
In other words, under \morp, the number of white balls in the multiset 
of size $m$ follows a binomial distribution with parameters $m$, and ${w}/{\tau}$, one that counts the number of successes in~$m$ 
independent identically distributed experiments, with  $w/\tau$ probability 
of success per experiment.
Let $Y$ denote such a binomially distributed 
random variable. 
Then, the first two moments of $Y$ are given by
$$\E[Y]=m\frac w \tau,\qquad \E[Y^2]= m\frac w \tau\Big(1-\frac w \tau\Big) + m^2\frac{w^2}{\tau^2}.$$
The $\ell$th moments of $Y$ can be written as a polynomial in the parameter $w$:
$$\E[Y^\ell]
=\sum_{k=0}^{m} k^\ell \binom{m}k
\frac{w^k(\tau-w)^{m-k} }{\tau^m}
=\sum_{j=0}^{\ell}\Stir{\ell}j \fallfak{m}{j}\frac{w^j}{\tau^j}.$$
\subsection{Distributional equations}
In what follows, 
we use the notation 
$\mathbb{I}_{n}(W^kB^{m-k})$ to stand for the indicator
of the event that the multiset $\{W^kB^{m-k}\}$ is drawn in the
$n$th sampling.
Conditioning on the outcome of the $n$th draw, 
we obtain a distributional equation for $W_n$.
The number of white balls after $n$ draws is the number of white balls after $n-1$ draws, plus the contribution of white balls
after the $n$th sample is obtained (with \ $\law$ \ for equality in law):
\begin{equation}
\label{MuliDrawsLinDistEqn1}
W_{n}\ \law\ W_{n-1}  + \sum_{k=0}^{m} a_{m-k} \, \mathbb{I}_{n}(W^kB^{m-k}),\quad n\ge 1.
\end{equation}
Let $\field_{n-1}$ denote the $\sigma$-field generated by the first $n-1$ draws. 
The indicators $\mathbb{I}_{n}(W^kB^{m-k})$ satisfy
\begin{equation}
\label{MuliDrawsLinDistEqn2}
\P\bigl(\mathbb{I}_{n}(W^kB^{m-k})=1\given \field_{n-1}\bigr)=\frac{\binom{W_{n-1}}{k}\binom{B_{n-1}}{m-k} }{\binom{T_{n-1}}{m}}=\frac{\binom{W_{n-1}}{k}\binom{T_{n-1}-W_{n-1}}{m-k} }{\binom{T_{n-1}}{m}}
\end{equation}
for \mom, and 
\begin{equation}
\label{MuliDrawsLinDistEqn3}
\P\bigl(\mathbb{I}_{n}(W^kB^{m-k})=1\given \field_{n-1}\bigr)=\binom{m}{k}\frac{W_{n-1}^k B_{n-1}^{m-k} }{T_{n-1}^m}=\binom{m}{k}\frac{W_{n-1}^k(T_{n-1}-W_{n-1})^{m-k} }{T_{n-1}^{m}}
\end{equation}
for \mor. We obtain for $W_n^{s}$, $s\ge 1$, a distributional equation by taking the $s$th power 
of~\eqref{MuliDrawsLinDistEqn1}, and using the fact that the indicator variables are mutually exclusive:
\begin{equation}
\label{MuliDrawsLinDistEqn4}
W_{n}^s\ \law  \ \sum_{\ell=0}^{s}\binom{s}\ell W_{n-1}^{s-\ell}\sum_{k=0}^{m} a_{m-k}^{\ell} \, \mathbb{I}_{n}(W^kB^{m-k})
,\qquad n\ge 1.
\end{equation}

\section{Moment structure}
In order to study the moments of $W_n$ and of the almost sure limits $\mathcal{W}_\infty$, $\mathfrak{W}_\infty${\color{red}, }
  we analyze higher shifted moments. This will enable us to provide a recursive characterization of the moments of both. We also complement these result by obtaining concrete explicit expressions for the moments.

\subsection{Higher moments: exact representations\label{LargeMom}}
The asymptotic expansion of the expected value suggests that we shift $W_n$ by the dominant term $\frac{a_m}{1-\Lambda}\, n$ of its asymptotic expansion~\eqref{Prop:caligraphicW}.
Thus, we consider the shifted random variable $\W_n$ defined by
\begin{equation}
\W_n=W_n-\frac{a_m }{1-\Lambda}\, n,
\label{ShiftDef}
\end{equation}
which is well defined for arbitrary $\Lambda<1$. For triangular urn models $a_m=0$, so
\begin{equation}
\W_n=W_n.
\label{ShiftDefTriangular}
\end{equation}

As discussed in Remark~\ref{Complication}, this random variable has to be interpreted in the right way in the P\'olya urn case when $a_m=0$ and $\Lambda=1$, interpreting the fraction 
$\frac{a_m}{1-\Lambda}$ as zero, so $\W_n=W_n$,  regardless of the value of $\Lambda$. We obtain the following recurrence relation for the moments $\E[\W_n^s]$.
\begin{lemma}
\label{HIGHMOMlemma1}
The positive integer moments $\E[\W_n^s]$ of the shifted random variable $\W_n=W_n-\frac{a_m }{1-\Lambda}\, n$, with $W_n$ counting the number 
of white balls in a two-color balanced affine urn model with multiple drawings, satisfy for both sampling models 
$\mathcal{M}$ and $\mathcal{R}$ the recurrence relation
\[
\E[\W_n^s]=\sum_{r=0}^{s}f_{n,s,r} \, \E[\W_{n-1}^r],\quad n\ge 1,
\]
with initial values $\E[\W_0^s]=W_0^s$, and the values $f_{n,s,r}$ being given by 
\begin{equation*}
\begin{split}
f_{n,s,r}&=\sum_{j=s-r}^{s}\binom{s}j \Lambda^j\sum_{\ell=j-(s-r)}^{j}\binom{\ell}{j-(s-r)}
\Big(\frac{a_m(n-1)}{1-\Lambda}\Big)^{\ell-j+s-r} \\
&\qquad \qquad {} \times\sum_{i=\ell}^{j}\binom{j}{i}\frac{\sigma^i a_m^{j-i}}{m^i(\Lambda-1)^{j-i}}\, p_{n;(i,\ell)},
\end{split}
\end{equation*}
for $0\le r\le s$ with
\[
p_{n;(i,\ell)}=
\begin{cases}
\sum_{h=\ell}^{i^{*}}(-1)^{h-i}\frac{\stir{h}\ell \Stir{i}h \binom{m}h}{\binom{T_{n-1}}{h}},& \mom;\\
\Stir{i}\ell \frac{\fallfak{m}{\ell}}{T_{n-1}^\ell},&\mor.
\end{cases}
\]
with $i^*=\min\{i,m\}$. 
\end{lemma}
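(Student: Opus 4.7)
The plan is to convert the stepwise distributional equation for $W_n$ into a linear recurrence for the moments $\E[\W_n^s]$ by (a) collapsing the indicator sum using affinity, (b) plugging in the known hypergeometric/binomial conditional moments, and (c) unfolding $W_{n-1} = \W_{n-1} + \frac{a_m(n-1)}{1-\Lambda}$ to express everything in terms of $\W_{n-1}^r$.

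First, I would trivialize the sum of mutually exclusive indicators. Since exactly one of the events $\{W^k B^{m-k}\}$ occurs in draw $n$, equation~\eqref{MuliDrawsLinDistEqn1} reduces to $W_n \law W_{n-1} + a_{m-X_n}$, where $X_n$ is the number of white balls in the $n$-th sample. The affinity $a_k = (m-k)(a_{m-1}-a_m)+a_m$ together with $a_{m-1}-a_m = \sigma\Lambda/m$ yields $a_{m-X_n} = \frac{\sigma\Lambda}{m} X_n + a_m$, so that
\[
\W_n - \W_{n-1} \ =\ \frac{\sigma\Lambda}{m}\, X_n + a_m - \frac{a_m}{1-\Lambda} \ =\ \frac{\sigma\Lambda}{m}\, X_n + \frac{a_m\Lambda}{\Lambda-1}.
\]
In the triangular case $a_m=0$, interpreted as in Remark~\ref{Complication}, this reduces to $\frac{\sigma\Lambda}{m}X_n$, consistent with $\W_n=W_n$.

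Second, I would raise to the $s$-th power by the binomial theorem and expand the $j$-th power of the increment as a polynomial in $X_n$, obtaining
\[
\W_n^s \ =\ \sum_{j=0}^s \binom{s}{j}\Lambda^j \W_{n-1}^{s-j} \sum_{i=0}^j \binom{j}{i}\frac{\sigma^i a_m^{j-i}}{m^i(\Lambda-1)^{j-i}}\, X_n^i.
\]
Taking $\E[\cdot \given \field_{n-1}]$, I would invoke the preliminaries: $X_n \given \field_{n-1}$ is hypergeometric under $\mathcal{M}$ and binomial under $\mathcal{R}$, and in both cases $\E[X_n^i\given \field_{n-1}] = \sum_{\ell} p_{n;(i,\ell)} W_{n-1}^\ell$ with $p_{n;(i,\ell)}$ exactly the quantity defined in the statement (extracted from the polynomial-in-$w$ expressions for the factorial moments, using the $\stir{h}{\ell}$- and $\Stir{i}{h}$-identities relating powers to falling factorials). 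Then I would substitute $W_{n-1} = \W_{n-1} + \frac{a_m(n-1)}{1-\Lambda}$ and expand $W_{n-1}^\ell$ binomially in $\W_{n-1}$, so that every term becomes a polynomial in $\W_{n-1}$; taking unconditional expectation converts $\W_{n-1}^r$ into $\E[\W_{n-1}^r]$.

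Finally, I would reindex the resulting multi-sum with $r$ as outermost index via $r = s-j+\ell-t$ (so $t = s-j+\ell-r$), using $\binom{\ell}{t}=\binom{\ell}{j-(s-r)}$ by symmetry, and collecting terms to read off the coefficient $f_{n,s,r}$ of $\E[\W_{n-1}^r]$. The summation ranges $s-r\le j\le s$, $j-(s-r)\le \ell\le j$, and $\ell\le i\le j$ are forced, respectively, by $0\le t\le \ell$ (equivalently $r\ge s-j$), by non-vanishing of the binomial coefficient, and by the vanishing of $p_{n;(i,\ell)}$ outside $\ell\le \min(i,m)$. The initial condition $\E[\W_0^s]=W_0^s$ is immediate since $\W_0=W_0$.

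The main technical obstacle is not analytic but combinatorial: the careful bookkeeping of four interacting summation indices $j,i,\ell,r$, together with the internal index $h$ hidden inside $p_{n;(i,\ell)}$ for model $\mathcal{M}$, and the verification that all ranges of summation match the stated formula. Once the conditional moments of $X_n$ are plugged in as polynomials in $W_{n-1}$ and $W_{n-1}$ is re-expressed in terms of $\W_{n-1}$, the lemma follows by a purely algebraic rearrangement.
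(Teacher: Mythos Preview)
Your proposal is correct and follows essentially the same route as the paper: derive the distributional equation $\W_n=\W_{n-1}+\frac{\sigma\Lambda}{m}X_n+\frac{a_m\Lambda}{\Lambda-1}$, raise to the $s$-th power, replace $\E[X_n^i\mid\field_{n-1}]$ by the polynomial $\sum_\ell p_{n;(i,\ell)}W_{n-1}^\ell$, then rewrite $W_{n-1}^\ell$ in powers of $\W_{n-1}$ and reindex. The only difference is cosmetic---you package $\sum_k k\,\mathbb{I}_n(W^kB^{m-k})$ as $X_n$, whereas the paper keeps the indicator sum explicit.
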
 
\begin{remark}
\label{RemarkSimplyTriangularF}
One readily obtains concrete expressions for $f_{n,s,r}$ and $s=1,2,\dots$ using the formula above and preferentially a Computer Algebra System. For example, for $s=1$ we obtain the model-independent result
$$
%f_{n,1,0}=a_m\Lambda\Big(\frac{(n-1)\sigma}{(1-\Lambda)T_{n-1}}+\frac{1}{\Lambda-1}\Big) 
%after simplifications we obtain
f_{n,1,0}=-\frac{a_m\Lambda T_0 }{(1-\Lambda)T_{n-1}},\qquad f_{n,1,1}=1+\frac{\sigma\Lambda}{T_{n-1}}.
$$ 
Note that for $s\ge 2$ the values $f_{n,s,r}$ are model-dependent. For triangular urns, the values $f_{n,s,r}$ can be simplified: One readily observes that $f_{n,s,0}=0$, and for $1\le r\le s$ we have
\begin{equation*}
f_{n,s,r}=\sum_{j=s-r}^s \binom{s}j \frac{\Lambda^j\sigma^j}{ m^j} \, p_{n;(j,j-(s-r))}.
\end{equation*}
\end{remark}
\begin{proof}
From~\eqref{Eq:ak} and \eqref{MuliDrawsLinDistEqn1} 
%Proposition~\ref{MuliDrawsLinPropLinear} 
%I think the new (3) is more relevant.
we obtain the distributional equation
\begin{equation}
\label{CenteredMomEqn1}
\W_n\law \W_{n-1}+\frac{\sigma\Lambda}{m}\sum_{k=0}^{m}k \mathbb{I}_{n}(W^kB^{m-k})+a_m\frac{\Lambda}{\Lambda-1}.
\end{equation}
Taking the \ith{s} power in~\eqref{CenteredMomEqn1} leads to
%to:
\begin{equation*}
\begin{split}
%\label{CenteredMomEqn3}
\W_n^s&\law \sum_{j=0}^{s}\binom{s}j\W_{n-1}^{s-j}\Lambda^j\bigg[\sum_{i=0}^{j}\binom{j}i \frac{\sigma^ia_m^{j-i}}{m^i(\Lambda-1)^{j-i}}
\sum_{k=0}^{m}k^i \mathbb{I}_{n}(W^kB^{m-k})\bigg].
\end{split}
\end{equation*}
We take the conditional expectation and simplify the sum
\[
\sum_{k=0}^{m}k^i \, \E\big[ \mathbb{I}_{n}(W^kB^{m-k})\mid \field_{n-1}\big].
\]
By~\eqref{MuliDrawsLinDistEqn2} and~\eqref{MuliDrawsLinDistEqn3} and the properties 
of the binomial and hypergeometric distributions we have
\[
\sum_{k=0}^{m}k^i\, \E\big[ \mathbb{I}_{n}(W^kB^{m-k}\mid \field_{n-1}\big]
=\sum_{\ell=0}^{i}p_{n;(i,\ell)}W_{n-1}^\ell, 
\]
with $p_{n;(i,\ell)}$ as given in Lemma~\ref{HIGHMOMlemma1}.
Finally, converting $W_{n-1}^\ell$ into powers of $\W_{n-1}$ and several summation changes lead to the stated result.
\end{proof}

A direct consequence of Lemma~\ref{HIGHMOMlemma1} is an expression of the 
moments $\E[\W_n^s]$ in terms of the moments $\E[\W_\ell^r]$, $1\le \ell\le n-1$ and $0\le r\le s-1$.
\begin{prop}
\label{CenteredMomProp}
The positive integer moments $\E[\W_n^s]$ satisfy the recurrence relation
\[
\E[\W_n^s]= \bigg(\prod_{j=1}^n f_{j,s,s}\bigg)
\bigg(W_0^s+\sum_{\ell=1}^n \frac{\sum_{i=0}^{s-1}f_{\ell,s,i}\E[\W_{\ell-1}^i]}{\prod_{j=1}^\ell f_{j,s,s}}\bigg),
\]
with $f_{n,s,r}$ as given in Lemma~\ref{HIGHMOMlemma1};
in particular 
\[
f_{n,s,s}=\sum_{j=0}^{s^*} \binom{s}j C^j\, p_{n;(j,j)},\quad\text{with}\quad C=\frac{\Lambda\sigma}{m},
\]
and $s^*=\min\{s,m\}$.
\end{prop}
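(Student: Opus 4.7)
My plan is to recognize the recurrence in Lemma~\ref{HIGHMOMlemma1} as a scalar first-order linear recurrence in $n$ for the sequence $a_n:=\E[\W_n^s]$, with the contributions from $r<s$ acting as an inhomogeneous forcing term built from strictly lower-order moments. Solving such a recurrence in closed form is standard; the bulk of the remaining work is to extract the advertised simplified expression for the diagonal coefficient $f_{n,s,s}$.

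First I isolate the diagonal $r=s$ term in Lemma~\ref{HIGHMOMlemma1} and write
$$a_n = f_{n,s,s}\, a_{n-1} + g_n, \qquad g_n := \sum_{i=0}^{s-1} f_{n,s,i}\, \E[\W_{n-1}^i], \qquad a_0 = W_0^s,$$
the initial value being immediate from $\W_0=W_0$. Treating $g_n$ as known data, the standard integrating-factor step is to divide through by $\prod_{j=1}^n f_{j,s,s}$, which produces a telescoping identity. Summing from $1$ to $n$ and multiplying back by $\prod_{j=1}^n f_{j,s,s}$ gives exactly the claimed closed form. (If one prefers to avoid any implicit nonvanishing assumption on the $f_{j,s,s}$, the same identity may be stated in the equivalent product form $a_n = \bigl(\prod_{j=1}^n f_{j,s,s}\bigr) W_0^s + \sum_{\ell=1}^n \bigl(\prod_{j=\ell+1}^n f_{j,s,s}\bigr) g_\ell$, which only requires that the partial products be interpreted as $1$ when empty.)

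Second, I derive the explicit formula for $f_{n,s,s}$ by substituting $r=s$ into the general expression of Lemma~\ref{HIGHMOMlemma1}. This forces $s-r=0$, so the middle summation over $\ell$ collapses to $\ell=j$ (both endpoints equal $j$) and, in turn, the innermost summation over $i$ collapses to $i=j$. The factor $\binom{\ell}{j-(s-r)}$ becomes $\binom{j}{0}=1$, the exponent $\ell-j+s-r$ becomes $0$, and $\binom{j}{i}\sigma^i a_m^{j-i}/(m^i(\Lambda-1)^{j-i})$ collapses to $\sigma^j/m^j$. What remains is
$$f_{n,s,s} = \sum_{j=0}^{s} \binom{s}{j} \Lambda^j \frac{\sigma^j}{m^j}\, p_{n;(j,j)} = \sum_{j=0}^{s} \binom{s}{j} C^j\, p_{n;(j,j)}, \qquad C=\frac{\Lambda\sigma}{m}.$$
The truncation at $s^{\ast}=\min\{s,m\}$ follows from $p_{n;(j,j)}=0$ for $j>m$: in \mor\ we have $p_{n;(j,j)}=\fallfak{m}{j}/T_{n-1}^{\,j}$, which vanishes for $j>m$, while in \mom\ the defining sum runs over $h\in[j,\min(j,m)]$ and is empty for $j>m$.

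The argument is essentially routine once Lemma~\ref{HIGHMOMlemma1} is in hand. The only mildly delicate point is the index bookkeeping in the last step: verifying that the triple sum defining $f_{n,s,r}$ indeed collapses to the stated single family of terms when $r=s$, and that the resulting formula is uniform in the two sampling schemes \mom\ and \mor\ without further case analysis. That, together with handling the vanishing of $p_{n;(j,j)}$ for $j>m$ uniformly across both models, is where I would invest the most care.
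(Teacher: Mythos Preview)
Your proof is correct and follows essentially the same route as the paper: isolate the diagonal term $r=s$, divide by the running product $\prod_{j=1}^n f_{j,s,s}$ to obtain a telescoping identity, sum, and multiply back. Your additional derivation of the closed form for $f_{n,s,s}$ (which the paper merely states) is also correct, though note a harmless slip: with $\ell=j$ and $s-r=0$ the factor $\binom{\ell}{j-(s-r)}$ is $\binom{j}{j}=1$, not $\binom{j}{0}$---the value is the same, so nothing downstream is affected.
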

\begin{proof}
Lemma~\ref{HIGHMOMlemma1} states that 
\[
\E[\W_n^s]=\sum_{r=0}^{s}f_{n,s,r} \E[\W_{n-1}^r] = f_{n,s,s}\E[\W_{n-1}^s]+\sum_{r=0}^{s-1}f_{n,s,r} \E[\W_{n-1}^r]. 
\]
Consequently, we can write
\[
\frac{\E[\W_n^s]}{\prod_{j=1}^{n}f_{j,s,s}}=\frac{\E[\W_{n-1}^s]}{\prod_{j=1}^{n-1}f_{j,s,s}}+\sum_{r=0}^{s-1}\frac{f_{n,s,r} \E[\W_{n-1}^r]}{\prod_{j=1}^{n}f_{j,s,s}}.
\]
This implies that
\[
\frac{\E[\W_n^s]}{\prod_{j=1}^{n}f_{j,s,s}}=\E[\W_{0}^s]+\sum_{\ell=1}^{n}\sum_{r=0}^{s-1}\frac{f_{\ell,s,r} \E[\W_{\ell-1}^r]}{\prod_{j=1}^{\ell}f_{j,s,s}}.
\]
Multiplication with $\prod_{j=1}^{n}f_{j,s,s}$ leads to the stated result.
\end{proof}
\subsection{Limits of the shifted moments}
Using Proposition~\ref{CenteredMomProp}, we can derive asymptotic expansions of higher moments of $\W_n$; compare 
with~\cite{KuMaPan2013+}, where a special model of small-index urns was treated similarly. This would also allow to strengthen our previous results concerning the central limit theorems for small-index urns, $\Lambda\le \frac12$, adding convergence of positive integer moments of the number of white balls to the moments of the normal distribution. We omit the involved computational details. As a first application of Proposition~\ref{CenteredMomProp}, we prove the existence of the limits of the normalized moments of $\W_n=W_n-\frac{a_m}{1-\Lambda}\, n$, for large-index urns and also for triangular urn models.

\begin{theorem}\label{TheShiftMoms}
For large urn models with $\Lambda>\frac12$ or triangular urn models with $a_m=0$ the moments $E_s=\lim_{n\to\infty}\E\Big[\frac{\W_n^s}{n^{s\Lambda}}\Big]$ exist. For $s=1$ we obtain
$$
E_1=\Big( W_0-\frac{\frac{a_m T_0}\sigma}{1-\Lambda}\Big) \frac{\Gamma(\frac{T_0}\sigma)}{\Gamma(\frac{T_0}\sigma+\Lambda)}.
$$
For the higher moments $E_s$, $s\ge 1$, we obtain for \momp the expressions 
\[
E_s= 
\bigg( \prod_{\ell=1}^{s^*}\frac{\Gamma(\frac{T_0+1-\ell}{\sigma})}{\Gamma(\lambda_{\ell,s})}\bigg)
\bigg(W_0^s+\sum_{\ell=1}^\infty \frac{\sum_{i=0}^{s-1}f_{\ell,s,i}\E[\W_{\ell-1}^i]}{\prod_{j=1}^\ell f_{j,s,s}}\bigg),
\]
with $s^*=\min\{s,m\}$, and $\lambda_{\ell,s}$ denoting the negated roots %(times minus one) 
of the monic polynomials
\[
P_s(x)=\frac{s^*!}{\sigma^{s^*}}\sum_{\ell=0}^{s^*}\frac{\binom{s}{\ell}}{\binom{s^*}\ell}C^\ell\binom{m}\ell\, \binom{x \sigma+T_0-\ell}{s^*-\ell}
%= \prod_{j=1}^{s^*}(x+\lambda_{\ell,s})
,\quad
C=\frac{\Lambda\sigma}{m}.
\]
For \morp we obtain 
\[
E_s=
\bigg( \frac{\Gamma^s(\frac{T_0}{\sigma})}{\prod_{\ell=1}^{s}\Gamma(\mu_{\ell,s})}\bigg)
\bigg(W_0^s+\sum_{\ell=1}^\infty \frac{\sum_{i=0}^{s-1}f_{\ell,s,i}\E[\W_{\ell-1}^i]}{\prod_{j=1}^\ell f_{j,s,s}}\bigg),
\]
with $\mu_{\ell,s}$ denoting the negated roots
of the polynomials
\[
Q_s(x)=\frac{1}{\sigma^s}\sum_{\ell=0}^s \binom{s}\ell C^\ell \binom{m}\ell \ell!\, (x\sigma+T_0)^{s-\ell}
%= \prod_{j=1}^{s}(x+\mu_{\ell,s})
.
\]
\end{theorem}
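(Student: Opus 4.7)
The plan is to compute the $n\to\infty$ limits appearing in Proposition~\ref{CenteredMomProp} directly, analysing the multiplicative prefactor $\prod_{j=1}^n f_{j,s,s}$ and the additive series separately. For the prefactor I will show it is asymptotic to the displayed Gamma constant times $n^{s\Lambda}$; for the series I will establish convergence by strong induction on $s$. Models $\mathcal{M}$ and $\mathcal{R}$ can be treated in parallel, with $(P_s,\lambda_{\ell,s})$ replaced by $(Q_s,\mu_{\ell,s})$.

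First I would specialise Lemma~\ref{HIGHMOMlemma1} to $r=s$, in which case only $j=s$ and $i=\ell=j$ survive in the nested sums and $p_{n;(j,j)}$ reduces to $\binom{m}{j}/\binom{T_{n-1}}{j}$ under $\mathcal{M}$ (resp.\ $\fallfak{m}{j}/T_{n-1}^{j}$ under $\mathcal{R}$). Collecting over the common denominator $\fallfak{T_{n-1}}{s^*}$ (resp.\ $T_{n-1}^s$) exhibits
\[
f_{n,s,s}^{\mathcal{M}} = \frac{\sigma^{s^*}\,P_s(n-1)}{\fallfak{T_{n-1}}{s^*}},\qquad f_{n,s,s}^{\mathcal{R}} = \frac{\sigma^s\,Q_s(n-1)}{T_{n-1}^{s}},
\]
with $P_s,Q_s$ monic of degrees $s^*,s$. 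Factoring the roots as $-\lambda_{\ell,s}$ and using $\fallfak{T_{n-1}}{s^*}=\sigma^{s^*}\prod_{\ell=1}^{s^*}(n-1+(T_0+1-\ell)/\sigma)$, the product telescopes into a ratio of Gamma values,
\[
\prod_{j=1}^n f_{j,s,s}^{\mathcal{M}} = \prod_{\ell=1}^{s^*}\frac{\Gamma((T_0+1-\ell)/\sigma)}{\Gamma(\lambda_{\ell,s})}\cdot\frac{\Gamma(n+\lambda_{\ell,s})}{\Gamma(n+(T_0+1-\ell)/\sigma)},
\]
which by Stirling is $\sim \bigl(\prod_\ell \Gamma((T_0+1-\ell)/\sigma)/\Gamma(\lambda_{\ell,s})\bigr)\,n^{\alpha_s}$ with $\alpha_s=\sum_\ell\lambda_{\ell,s}-\sum_\ell (T_0+1-\ell)/\sigma$. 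Vieta identifies $\sum_\ell\lambda_{\ell,s}$ with the coefficient of $x^{s^*-1}$ in $P_s$; only the $\ell=0$ and $\ell=1$ summands in the definition contribute to that coefficient, and using $C=\Lambda\sigma/m$ they combine to $s\Lambda+\sum_\ell (T_0+1-\ell)/\sigma$, yielding $\alpha_s=s\Lambda$ as required. The identical argument with $Q_s$ handles model~$\mathcal{R}$.

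For the series I would induct on $s$, with base case $s=1$ provided by Lemma~\ref{Prop:caligraphicW}. Assuming $\E[\W_\ell^i]=O(\ell^{i\Lambda})$ for all $i<s$ and using $\prod_{j=1}^\ell f_{j,s,s}\sim c_s\ell^{s\Lambda}$, it suffices to bound $f_{\ell,s,r}$. For triangular urns Remark~\ref{RemarkSimplyTriangularF} together with the identity $p_{\ell;(s-r,0)}=0$ (valid for $s-r\ge 1$) gives $f_{\ell,s,r}=O(\ell^{-1})$ at once, so the summand is $O(\ell^{-1-(s-r)\Lambda})$, summable. For large-index urns the $j=s-r$ block in Lemma~\ref{HIGHMOMlemma1} is a priori $O(1)$, but after interchanging the $\ell$- and $i$-sums its limit reads
\[
\binom{s}{s-r}\Lambda^{s-r}\Big(\tfrac{a_m}{1-\Lambda}\Big)^{s-r}\E\bigl[(1+zY)^{s-r}\bigr],
\]
where $Y\sim\mathrm{Binom}(m,\rho)$, $\rho=a_m/(\sigma(1-\Lambda))$, and $z$ is such that $zm\rho=-1$. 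For $r=s-1$ this equals $s\Lambda(a_m/(1-\Lambda))\,\E[1+zY]=0$, so $f_{\ell,s,s-1}=O(\ell^{-1})$ and the summand is $O(\ell^{-1-\Lambda})$; for $r\le s-2$ the crude bound $f_{\ell,s,r}=O(1)$ already yields a summand of order $\ell^{-(s-r)\Lambda}$, summable because $(s-r)\Lambda>1$ whenever $s-r\ge 2$ and $\Lambda>\tfrac12$.

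The hard part is precisely the cancellation producing $f_{\ell,s,s-1}=O(\ell^{-1})$: without it the general term of the series would decay only as $\ell^{-\Lambda}$ and would fail to be summable for any $\Lambda<1$. The source of the cancellation is the identification of the inner sum in the $j=1$ term as the conditional moment $\E[X^i\mid W_{n-1}=a_m(n-1)/(1-\Lambda)]$ of the sampling distribution, evaluated at the deterministic shift built into $\W_n$ by~\eqref{ShiftDef}; this is exactly what forces $\E[1+zY]=1+zm\rho=0$. Once this cancellation is in place, combining Step~2 with the bounds above delivers the closed forms announced in the theorem.
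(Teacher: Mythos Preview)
Your proposal follows the same architecture as the paper's proof: factor $f_{j,s,s}$ through the monic polynomials $P_s,Q_s$, telescope the product into ratios of Gamma functions, read off the exponent $s\Lambda$ from the next-to-leading coefficient via Vieta, and close the induction by splitting off the $r=s-1$ term and exhibiting the cancellation $f_{\ell,s,s-1}=O(\ell^{-1})$. The only difference is in how that cancellation is argued: the paper computes the $j=1$ block directly, pairing the $\ell=0$ term $\frac{a_m}{\Lambda-1}$ against the $\ell=1$ term $\frac{a_m(n-1)}{1-\Lambda}\cdot\frac{\sigma}{T_{n-1}}$ and watching them cancel to order $O(n^{-1})$; you recast the same block as the conditional moment $\E\bigl[(\frac{\sigma}{m}Y+\frac{a_m}{\Lambda-1})^{s-r}\bigr]$ of the sampling variable evaluated at the equilibrium proportion $\rho=a_m/(\sigma(1-\Lambda))$, which vanishes for $s-r=1$ since $\E[Y]=m\rho$. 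That is a pleasant conceptual gloss on the same identity.

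Two small imprecisions worth cleaning up: your displayed limit should carry the sign $\bigl(\frac{a_m}{\Lambda-1}\bigr)^{s-r}$ rather than $\bigl(\frac{a_m}{1-\Lambda}\bigr)^{s-r}$, and the identification $Y\sim\mathrm{Binom}(m,\rho)$ is specific to model~$\mathcal{R}$---under model~$\mathcal{M}$ the limiting law is different, though with the same mean $m\rho$. Neither affects the $s-r=1$ case you actually use, so the argument is sound.
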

\begin{remark}
The result above implies that for $s\ge 2$ the limits $E_s$ of the normalized moments of $\W_n=W_n-\frac{a_m}{1-\Lambda}\, n$ can be expressed as nested infinite sums. 
In particular, the second moments are for both models readily obtained as a double sum, additionally using Proposition~\ref{CenteredMomProp} for $f_{n,s,r}$ and Lemma~\ref{Prop:caligraphicW}
for $\E[\tilde{W}_n]$. This allows to obtain a closed form expression for the variance of $\W_n/n^{\Lambda}$, and also its limit $E_2-E_1^2>0$. 
For $m\ge 2$ it seems that these nested infinite sums cannot be simplified in general due to the structure of the monic polynomials $P_s(x)$ and $Q_s(x)$ and their negated roots. In contrast, for $m=1$ the roots are much simpler and simplifications occur; compare with the discussion in~\cite{ChenKu2013+}.
\end{remark}

\begin{proof}[Proof]
In order to prove the existence of $E_k$, $k\ge 1$ we first turn to the case $k=1$. By Lemma~\ref{Prop:caligraphicW} we readily obtain $\E[\tilde{W}_n]$ and the stated result for $E_1$.
Next we use Proposition~\ref{CenteredMomProp} and we proceed in two steps. First, we derive an asymptotic expansion of the products $\prod_{j=1}^n f_{j,s,s}$ as given in Lemma~\ref{HIGHMOMlemma1}. Then, we inductively prove the existence of the infinite sums in the expressions for $E_s$, $s>1$.
Concerning the asymptotic expansions of $\prod_{j=1}^n f_{j,s,s}$, we consider for \mom\
the relation
\[
f_{j,s,s}=\sum_{\ell=0}^{s^*} \binom{s}\ell C^\ell \frac{\binom{m}\ell}{\binom{T_{j-1}}\ell}=
\frac{\sum_{\ell=0}^{s^*}\frac{\binom{s}\ell}{\binom{s^*}\ell} C^\ell \binom{m}\ell \binom{T_{j-1}-\ell}{s^*-\ell}}{\binom{T_{j-1}}{s^*}}
=\frac{\sum_{\ell=0}^{s^*}\frac{\binom{s}\ell}{\binom{s^*}\ell} C^\ell \binom{m}\ell \binom{(j-1)\sigma+T_0-\ell}{s^*-\ell}}{\binom{(j-1)\sigma+T_0}{s^*}}. 
\]
The monic polynomials $P_s(x)$, as stated in Theorem~\ref{TheShiftMoms}, are related to $ f_{j,s,s}$ 
%\[
%P_s(x)=\frac{s!}{\sigma^s}\sum_{\ell=0}^{s}c^\ell\binom{m}\ell\binom{x \sigma+T_0-\ell}{s-\ell}.
%\]
by
\[
f_{j,s,s}=\frac{P_s(j-1)}{\prod_{\ell=1}^{s^*}(j-1+\frac{T_0+1-\ell}{\sigma})}.
\]
Let $\lambda_{\ell,s}$ denote the negated roots of the equation $P_s(x)=0$, such that $P_s(x)=\prod_{\ell=1}^{s^*}(x+\lambda_{\ell,s})$. 
Consequently,
\[
f_{j,s,s}=\frac{\prod_{\ell=1}^{s^*}(j-1+\lambda_{\ell,s})}{\prod_{\ell=1}^{s^*}(j-1+\frac{T_0+1-\ell}{\sigma})},\qquad\text{and } \qquad
\prod_{j=1}^{n}f_{j,s,s}=\prod_{\ell=1}^{s^*} \frac{\Gamma(n+\lambda_{\ell,s})\, \Gamma(\frac{T_0+1-\ell}{\sigma})}{\Gamma(\lambda_{\ell,s})\, \Gamma(n+\frac{T_0+1-\ell}{\sigma})}.
\]
Note that by definition, $f_{j,s,s}>0$ for all $j\in\N$. Hence, the negated roots $\lambda_{\ell,s}\neq -n$, for all $n\in\N$ and the expression above is well defined.
By Stirling approximation, we obtain the asymptotic expansion
\begin{equation*}
\prod_{j=1}^{n}f_{j,s,s}=\frac{n^{\sum_{\ell=1}^{s^*}\lambda_{\ell,s}}}{n^{\frac{s^* T_0-\binom{s^*}{2}}{\sigma}}}\frac{\prod_{\ell=1}^{s^*}\Gamma(\frac{T_0+1-\ell}{\sigma})}{\Gamma(\lambda_{\ell,s})}\Big(1+\mathcal{O}\Bigl(\frac1n\Bigr)\Bigr).
\end{equation*}
Let $[x^k]$ denote the extraction of coefficients operator. Since
\begin{align*}
\sum_{\ell=1}^{s}\lambda_{\ell,s}&=[x^{s^*-1}]\prod_{\ell=1}^{s^*}(x+\lambda_{\ell,s})=[x^{s^*-1}]\frac{s^*!}{\sigma^{s^*}}\sum_{\ell=0}^{s^*}\frac{\binom{s}{\ell}}{\binom{s^*}{\ell}}C^\ell\binom{m}\ell\binom{x \sigma+T_0-\ell}{s^*-\ell}\\
&=\frac{s^*!}{\sigma^{s^*}}[x^{s^*-1}]\bigg(\binom{ x \sigma+T_0}{s^*} + \frac{s}{s^*}\, Cm\binom{x \sigma+T_0-1}{s^*-1}\bigg)\\
&=\frac{s^*!}{\sigma^{s^*}}\Big(\frac{\sigma^{s^*-1}(s T_0-\binom{s^*}{2})}{s^*!}+\frac{sCm\sigma^{s^*-1}}{s^*(s^*-1)!}\Big) \\
&=\frac{s^*T_0-\binom{s^*}2}{\sigma} + \frac{Cm s}{\sigma}\\
&=\frac{s^* T_0-\binom{s^*}2}{\sigma} + \Lambda s,
\end{align*}
it follows that $n^{\sum_{\ell=1}^{s}\lambda_{\ell,s}}=n^{\Lambda s} n^{\frac{s^* T_0-\binom{s^*}2}{\sigma}}$.
Hence, we get for \momp the asymptotic expansion:
\begin{equation*}
\prod_{j=1}^{n}f_{j,s,s}=n^{\Lambda s} \prod_{\ell=1}^{s^*}\frac{\Gamma(\frac{T_0+1-\ell}{\sigma})}{\Gamma(\lambda_{\ell,s})}\Big(1+\mathcal{O}\Bigl(\frac1n\Bigr)\Bigr).
\end{equation*}

Concerning \morp we can proceed in a similar fashion. We have
\[
f_{j,s,s}=\sum_{\ell=0}^s \binom{s}\ell c^\ell \frac{\fallfak{m}{\ell}}{T_{j-1}^{\ell}}=\frac{\sum_{\ell=0}^s \binom{s}\ell c^\ell \binom{m}\ell \ell! \,T_{j-1}^{s-\ell}}{T_{j-1}^{s}}
=\frac{\sum_{\ell=0}^s \binom{s}\ell c^\ell \binom{m}\ell \ell!\, ((j-1)\sigma+T_0)^{s-\ell}}{((j-1)\sigma+T_0)^{s}},
\]
and consider the monic polynomials $Q_s(x)$ with
\[
f_{j,s,s}=\frac{Q_s(j-1)}{(j-1+\frac{T_0}{\sigma})^s}.
\] 
Using arguments similar to \momp we obtain 
\begin{equation*}
\prod_{j=1}^{n}f_{j,s,s}=n^{\Lambda s}  \frac{\Gamma^s(\frac{T_0}{\sigma})}{\prod_{\ell=1}^{s}\Gamma(\mu_{\ell,s})}\Big(1+\mathcal{O}\Bigl(\frac1n\Bigr)\Bigr).
\end{equation*}
It remains to prove the existence of the moments $E_s=\lim_{n\to\infty}\E\Big[\frac{\W_n^s}{n^{s\Lambda}}\Big]$. 
By the asymptotic expansions of $\prod_{j=1}^{n}f_{j,s,s}$ the limit $\lim_{n\to\infty}\frac{\prod_{j=1}^n f_{j,s,s}}{n^{s\Lambda}}$ exists, and we have 
\[
\lim_{n\to\infty}\E\Big[\frac{\W_n^s}{n^{s\Lambda}}\Big]= \lim_{n\to\infty}\frac{\prod_{j=1}^n f_{j,s,s}}{n^{s\Lambda}}
\bigg(W_0^s+\sum_{\ell=1}^\infty \frac{\sum_{i=0}^{s-1}f_{\ell,s,i}\E[\W_{\ell-1}^i]}{\prod_{j=1}^\ell f_{j,s,s}}\bigg).
\]
We have to show that the sums 
\[
\sum_{\ell=1}^n \frac{\sum_{i=0}^{s-1}f_{\ell,s,i}\E[\W_{\ell-1}^i]}{\prod_{j=1}^\ell f_{j,s,s}}
\]
are convergent, for $n\to\infty$. Assume inductively that the moments $\E[\W_n^s]$ satisfy $\E[\W_n^s]\sim \kappa_s n^{s\Lambda}$ for some values $\kappa_s$. For $s=1$, this readily follows from the asymptotic expansion of $\E[W_n]$ in Lemma~\ref{Prop:caligraphicW}. For $s>1$ we analyze the recurrence relation stated in Proposition~\ref{CenteredMomProp}: 
\[
\E[\W_n^s]= \bigg(\prod_{j=1}^n f_{j,s,s}\bigg)
\bigg(W_0^s+\sum_{\ell=1}^n \frac{\sum_{i=0}^{s-1}f_{\ell,s,i}\E[\W_{\ell-1}^i]}{\prod_{j=1}^\ell f_{j,s,s}}\bigg).
\]
We already know that there exist constants $c_s>0$, such that $\prod_{j=1}^{n}f_{j,s,s}>c_s n^{\Lambda s}$, $s\in\N$.
Since $p_{n,(i,\ell)}=\mathcal{O}(\frac{1}{T_{n-1}^{\ell}})=\mathcal{O}(\frac{1}{n^{\ell}})${\color{red},} we obtain the crude bound $f_{n,s,r}=\mathcal{O}(1)$.
By our induction assumption $\E[\W_n^r]\sim \kappa_r n^{r\Lambda}$ for all $1\le r<s$.
We split the sum $\sum_{i=0}^{s-1}f_{\ell,s,i}\E[\W_{\ell-1}^i]$ into two parts:
$\sum_{i=0}^{s-2}f_{\ell,s,i}\E[\W_{\ell-1}^i]$ and $f_{\ell,s,s-1}\E[\W_{\ell-1}^{s-1}]$.
Hence, for large-index urn models with $\Lambda>\frac12${\color{red},} we obtain for the first part
%removed a line and the display fit on one line
\begin{align*}
\sum_{\ell=1}^n \frac{\sum_{i=0}^{s-2}f_{\ell,s,i}\E[\W_{\ell-1}^i]}{\prod_{j=1}^\ell f_{j,s,s}}
= \mathcal{O}\bigg(\sum_{\ell=1}^n \sum_{i=0}^{s-2}\frac{\ell^{i\Lambda}}{\ell^{\Lambda s}} \bigg)
%&= \mathcal{O}\bigg(\sum_{\ell=1}^n \sum_{i=0}^{s-2} \frac{1}{\ell^{\Lambda (s-i)}} \bigg)\\
= \mathcal{O}\bigg(\sum_{\ell=1}^n \frac{1}{\ell^{2\Lambda}} \bigg)
= \mathcal{O}(1).
\end{align*}

For triangular urns we use Remark~\ref{RemarkSimplyTriangularF} and get:
\begin{equation*}
f_{n,s,r}=\sum_{j=s-r}^s \binom{s}j \frac{\Lambda^j\sigma^j}{ m^j} p_{n;(j,j-(s-r))}
= \frac{\Lambda^{s-r}\sigma^{s-r}}{ m^{s-r}} p_{n;(j,0)}+\mathcal{O}\Big(\frac1n\Big)\\
=\mathcal{O}\Big(\frac1n\Big),
\end{equation*}
since $p_{n;(j,0)}=0$ for $j>0$ by definition of the Stirling numbers of the first and second kind. 

Concerning the second part $f_{\ell,s,s-1}\E[\W_{\ell-1}^{s-1}]$ we have to show that 
\[
\sum_{\ell=1}^n \frac{f_{\ell,s,s-1}\E[\W_{\ell-1}^{s-1}]}{\prod_{j=1}^\ell f_{j,s,s}}
\]
exists. For triangular urns we already observed the bound $f_{n,s,s-1}=\mathcal{O}(\frac1n)$.
For large urns we refine the bound $f_{n,s,s-1}=\mathcal{O}(1)$ in the following way:
\begin{equation*}
\begin{split}
f_{n,s,s-1}&=\sum_{j=1}^{s}\binom{s}j \Lambda^j\sum_{\ell=j-1}^{j}\binom{\ell}{j-1}
\Big(\frac{a_m(n-1)}{1-\Lambda}\Big)^{\ell-j+1}\sum_{i=\ell}^{j}\binom{j}{i}\frac{\sigma^i a_m^{j-i}}{m^i(\Lambda-1)^{j-i}}p_{n;(i,\ell)}\\
&= \binom{s}1 \Lambda^1\sum_{\ell=0}^{1}\binom{\ell}{0}
\Big(\frac{a_m(n-1)}{1-\Lambda}\Big)^{\ell}\sum_{i=\ell}^{1}\binom{1}{i}\frac{\sigma^i a_m^{1-i}}{m^i(\Lambda-1)^{1-i}}p_{n;(i,\ell)}+\mathcal{O}\Big(\frac1n\Big).
\end{split}
\end{equation*}
Since $p_{n;(j,0)}=0$ for $j>0$ and $p_{n,(0,0)}=1$ we get
\begin{equation*}
\begin{split}
f_{n,s,s-1}&=\binom{s}1 \Lambda^1 \bigg(\frac{a_m}{\Lambda-1}+ \frac{a_m(n-1)}{1-\Lambda}\times  \frac{\sigma}{m} \times\frac{m}{T_{n-1}}\bigg)
+\mathcal{O}\Big(\frac1n\Big).
\end{split}
\end{equation*}
We have
\begin{equation*}
\begin{split}
&\frac{a_m}{\Lambda-1}+ \frac{a_m(n-1)}{1-\Lambda} \times\frac{\sigma}{m} \times \frac{m}{T_{n-1}}
=\frac{a_m}{\Lambda-1}+ \frac{a_m\sigma(n-1)}{(1-\Lambda)(\sigma(n-1)+T_0)}\\
&\quad=\frac{a_m}{\Lambda-1}+ \frac{a_m}{1-\Lambda}\Big(1+\mathcal{O}\Big(\frac1n\Big)\Big)\\
&\quad=\mathcal{O}\Big(\frac1n\Big),
\end{split}
\end{equation*}
proving that $f_{n,s,s-1}=\mathcal{O}(\frac1n)$.
This implies that 
\[
\sum_{\ell=1}^n \frac{f_{\ell,s,s-1}\E[\W_{\ell-1}^{s-1}]}{\prod_{j=1}^\ell f_{j,s,s}}
=\sum_{\ell=1}^n \mathcal{O}\Big(\frac{1}{\ell^{\Lambda+1}}\Big)=\mathcal{O}(1).
\]
Consequently, the sums 
\[
\sum_{\ell=1}^n \frac{\sum_{i=0}^{s-1}f_{\ell,s,i}\E[\W_{\ell-1}^i]}{\prod_{j=1}^\ell f_{j,s,s}}
\]
are convergent, for $n\to\infty$ and the moments $\E[\W_n^s]\sim \kappa_s n^{s\Lambda}$. 
\end{proof}

\subsection{Higher moments for large-index urns and triangular urns}
Next we relate the limits $E_s=\lim_{n\to\infty}\E\Big[\frac{\W_n^s}{n^{s\Lambda}}\Big]$ 
of the normalized moments of $\W_n=W_n-\frac{a_m }{1-\Lambda}\, n$ with the moments 
of~$\mathcal{W}_\infty$ for large-index urns and with $\mathfrak{W}_\infty$ for triangular urn models.
\begin{prop}
\label{MULIPropMom}
For large-index urns the positive integers moments of $\mathcal{W}_\infty$ exist and can by expressed in terms of  $E_s=\lim_{n\to\infty}\E\Big[\frac{\W_n^s}{n^{s\Lambda}}\Big]$, and
%. 
\[
\E[\mathcal{W}_\infty^s]=
\sum_{k=0}^{s}\binom{s}k \frac{\Gamma^k(\frac{T_0}\sigma+\Lambda)}{\Gamma^k(\frac{T_0}\sigma)}E_k  \Big( \frac{a_mT_0}{\sigma(1-\Lambda)}-W_0\Big)^{s-k}.
\]
For triangular urn models the positive integers moments of $\mathfrak{W}_\infty$ exist and can be
expressed in terms of $E_s=\lim_{n\to\infty}\E\Big[\frac{\W_n^s}{n^{s\Lambda}}\Big]$:
\[
\E[\mathfrak{W}_\infty^s]=\frac{\Gamma^s(\frac{T_0}\sigma+\Lambda)}{\Gamma^s(\frac{T_0}\sigma)}E_s.
\]
\end{prop}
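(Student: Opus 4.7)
The plan is to derive an exact identity linking the shifted variable $\W_n$ to the martingale $\calW_n$ (respectively $\mathfrak{W}_n$) via Lemma~\ref{Prop:caligraphicW}, pass to the almost-sure limit to realize $\calW_\infty$ (resp.\ $\mathfrak{W}_\infty$) as an affine function of $X_\infty := \lim_n \W_n/n^\Lambda$, and then read off the two moment formulas from the binomial theorem after upgrading almost-sure convergence to convergence of $s$th moments.

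First, substituting the explicit form of $\E[W_n]$ from Lemma~\ref{Prop:caligraphicW} into the definitions $\calW_n = g_n(W_n-\E[W_n])$ and $\W_n = W_n - \tfrac{a_m n}{1-\Lambda}$ gives, after a short rearrangement, the exact identity
\[
g_n\,\W_n \;=\; \calW_n \;+\; \Bigl(W_0 - \tfrac{a_m T_0/\sigma}{1-\Lambda}\Bigr) \;+\; \tfrac{a_m T_0/\sigma}{1-\Lambda}\, g_n.
\]
Since $g_n \to 0$ and $\calW_n \to \calW_\infty$ almost surely, the right-hand side converges a.s.\ to $\calW_\infty - B$ where $B := \tfrac{a_m T_0}{\sigma(1-\Lambda)} - W_0$; and since $g_n = A\, n^{-\Lambda}(1+O(1/n))$ with $A := \Gamma(T_0/\sigma+\Lambda)/\Gamma(T_0/\sigma)$, the left-hand side equals $A(1+O(1/n))\cdot \W_n/n^\Lambda$. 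This simultaneously establishes the almost-sure limit $X_\infty = \lim_n \W_n/n^\Lambda$ and the identity $\calW_\infty = A\,X_\infty + B$. For triangular urns the same argument simplifies under $a_m = 0$: then $\W_n = W_n$ and $\mathfrak{W}_n = g_n W_n = g_n \W_n$, so passing to the limit yields $\mathfrak{W}_\infty = A\, X_\infty$ directly.

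From these almost-sure identities, the two claimed moment formulas follow by expanding $\calW_\infty^s = (AX_\infty + B)^s$ (resp.\ $\mathfrak{W}_\infty^s = (AX_\infty)^s$) by the binomial theorem and taking expectations, provided $\E[X_\infty^k] = E_k$ for every $0 \le k \le s$. To justify this interchange of limit and expectation, I would combine the almost-sure convergence of $\W_n/n^\Lambda$ with uniform integrability of the family $\{(\W_n/n^\Lambda)^k\}_n$. The latter is supplied by Theorem~\ref{TheShiftMoms} applied at any even exponent $p \ge s+1$: the existence and finiteness of $E_p$ yields $\sup_n \E(\W_n/n^\Lambda)^p < \infty$, and Lyapunov's inequality then produces a uniform $L^{k+1}$-bound for every $k \le s$, which is more than enough to upgrade a.s.\ convergence to $L^s$-convergence.

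The main obstacle is exactly this uniform-integrability upgrade; the algebraic identity is routine bookkeeping and the binomial expansion is immediate. One also has to coordinate the two cases through the convention of Remark~\ref{Complication}, under which $\tfrac{a_m}{1-\Lambda}$ is interpreted as zero whenever $a_m = 0$, so that the triangular identity is a clean specialization of the large-index identity with $B = -W_0$, even though the underlying martingale changes from $\calW_n$ to $\mathfrak{W}_n$.
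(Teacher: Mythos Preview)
Your proof is correct and follows essentially the same route as the paper: both write $\calW_n$ as an affine function of $\W_n/n^\Lambda$ with coefficients $g_n n^\Lambda \to A$ and a deterministic shift tending to $B$, expand the $s$th power, and invoke Theorem~\ref{TheShiftMoms} for the limits $E_k$. Your version is in fact more careful than the paper's, which computes $\lim_n \E[\calW_n^s]$ without explicitly justifying why this equals $\E[\calW_\infty^s]$; your uniform-integrability argument via the higher even moments from Theorem~\ref{TheShiftMoms} closes that gap cleanly.
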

\begin{remark}
Starting with the second moments, a simple formula for $\E[\mathcal{W}_\infty^s]$ or $\E[\mathfrak{W}_\infty^s]$ seems to be elusive, 
due to the infinite sums in the expressions for $E_s$ in Theorem~\ref{TheShiftMoms}. We will present different explicit expressions for the moments in the next section.
\end{remark}

\begin{proof}[Proof]
We have
\begin{equation}
\begin{split}
\mathcal{W}_n&=g_n(W_n-\E[W_n])\\
&=g_n n^{\Lambda}\frac{W_n-n\frac{a_m}{1-\Lambda}}{n^{\Lambda}}+g_n\big( n\frac{a_m}{1-\Lambda}-\E[W_n]\big)\\
&=g_n n^{\Lambda} \frac{\W_n}{n^{\Lambda}} -g_n\Big(\E[W_n]- n\frac{a_m}{1-\Lambda}\Big).
\end{split}
\end{equation}
This implies that 
\[
\E[\mathcal{W}_n^s]=
\sum_{k=0}^{s}\binom{s}{k} (g_n n^{\Lambda})^k \E\left[\frac{\W_n^k}{n^{k\Lambda}}\right] \Big(g_n\big( n\frac{a_m}{1-\Lambda}-\E[W_n]\big)\Big)^{s-k}.
\]
We obtain from the asymptotic expansions of $g_n$ and $\E[W_n]$  in Lemma~\ref{Prop:caligraphicW}, and also the previous Lemma the stated result:
\[
\lim_{n\to\infty}g_n n^{\Lambda}=\frac{\Gamma(\frac{T_0}\sigma+\Lambda)}{\Gamma(\frac{T_0}\sigma)},
\qquad \lim_{n\to\infty} g_n\big( n\frac{a_m}{1-\Lambda}-\E[W_n]\big)=\frac{a_mT_0}{\sigma(1-\Lambda)}-W_0.
\]
\end{proof}

\section{An explicit expression for the moments}
In order to derive explicit expressions for the moments of triangular urns and large-index urns we study first the general solution of certain types of recurrence relations.
We use a correspondence between the recurrence relations and paths in weighted directed acyclic graphs in order to obtain explicit solutions using a lattice paths counting argument. 
\subsection{Triangular urns}
In the following we determine for triangular urn models an explicit expression for $\E[\tilde{W}_n^s]$ and also $E_s=\lim_{n\to\infty}\E\Big[\frac{\W_n^s}{n^{s\Lambda}}\Big]$. 
Note that $a_m=0$, so $\tilde{W}_n=W_n$.
\begin{lemma}
\label{MuliDrawsPolMomLemma1}
Let the sequence $(e_{n,s})_{n\ge 0,s\ge 1}$ be defined by the recurrence relation 
\[
e_{n,s}=\sum_{i=1}^{s}f_{n,s,i} e_{n-1,i},
\]
$n\ge 1$, with initial values $e_{0,s}=x^s$, $s\in\N_0$, for a given triple sequence $(f_{n,s,i})_{n\in\N_0,s\in\N,1\le i \le s}$. Then $e_{n,s}=\sum_{k=1}^{s}\varphi_{n,s,k}x^k$ is a polynomial in $x$ of degree $s$ with no constant term, with $\varphi_{n,s,k}$ defined by the recurrence relation
$\varphi_{n,s,k}=\sum_{\ell=k}^sf_{n,s,\ell} \varphi_{n-1,\ell,k}$, $n\ge 1$, $s\ge 1$, and $\varphi_{0,s,k}=\delta_{s,k}$.
\end{lemma}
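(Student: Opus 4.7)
The plan is to prove the representation $e_{n,s}=\sum_{k=1}^s \varphi_{n,s,k}x^k$ by induction on $n\ge 0$, treated uniformly in $s\ge 1$. For the base case $n=0$, the initial values $e_{0,s}=x^s$ and $\varphi_{0,s,k}=\delta_{s,k}$ immediately give $\sum_{k=1}^s \varphi_{0,s,k}x^k = x^s = e_{0,s}$, and the sum contains no constant term since $s\ge 1$.

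For the inductive step, assuming $e_{n-1,i}=\sum_{k=1}^i \varphi_{n-1,i,k}x^k$ for every $i\ge 1$, I would substitute into the defining recurrence to obtain
\[
e_{n,s}=\sum_{i=1}^{s}f_{n,s,i}\,e_{n-1,i}=\sum_{i=1}^{s}f_{n,s,i}\sum_{k=1}^{i}\varphi_{n-1,i,k}\,x^k,
\]
and then swap the order of summation (with $k$ ranging from $1$ to $s$ and $i$ from $k$ to $s$). The inner sum is exactly the recurrence that defines $\varphi_{n,s,k}$, yielding
\[
e_{n,s}=\sum_{k=1}^{s}x^k\sum_{i=k}^{s}f_{n,s,i}\,\varphi_{n-1,i,k}=\sum_{k=1}^{s}\varphi_{n,s,k}\,x^k.
\]
This closes the induction; the outer sum starts at $k=1$ throughout, so no constant term is ever introduced at any stage.

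The only mildly subtle point is the claim that the polynomial has degree exactly $s$, rather than merely at most $s$. I would settle this by reading the recurrence at the top index $k=s$, which collapses to $\varphi_{n,s,s}=f_{n,s,s}\,\varphi_{n-1,s,s}$ and hence $\varphi_{n,s,s}=\prod_{j=1}^{n}f_{j,s,s}$; in the intended application these diagonal factors are strictly positive, as already observed in the discussion following Proposition~\ref{CenteredMomProp}, so the leading coefficient does not vanish. There is no genuine obstacle here: the statement is a purely formal bookkeeping induction whose entire content is the single interchange of summation above, and the remainder of the section will then feed this representation into the lattice-path counting argument announced before the lemma.
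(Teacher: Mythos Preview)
Your proof is correct and follows essentially the same approach as the paper: induction on $n$, substitution of the hypothesis into the recurrence, and interchange of the two sums to read off the recurrence for $\varphi_{n,s,k}$. Your additional remark justifying that the degree is exactly $s$ (via $\varphi_{n,s,s}=\prod_{j=1}^n f_{j,s,s}>0$) is a welcome clarification that the paper leaves implicit.
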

\begin{proof}
The statement is by definition true for $n=0$ and arbitrary $s\in\N$.
Assuming the statement for values less than $n$, we obtain 
\[
e_{n,s}=\sum_{k=1}^{s}\varphi_{n,s,k}x^k=\sum_{i=1}^{s}f_{n,s,i} e_{n-1,i}=
\sum_{i=1}^{s}f_{n,s,i}\sum_{k=1}^{i}\varphi_{n-1,i,k}x^k
=\sum_{k=0}^{s}x^k\sum_{i=k}^s f_{n,s,i} \varphi_{n-1,i,k}.
\]
By comparison of coefficients of the powers of $x$, we obtain the given recurrence relations, which proves the stated result.
\end{proof}
\begin{lemma}
\label{MuliDrawsPolMomLemma2}
The coefficients $\varphi_{n,s,k}$ in the expansion of $e_{n,s}=\sum_{k=1}^{s}\varphi_{n,s,k}x^k$
are given by 
\[
\varphi_{n,s,s}=\prod_{j=1}^{n}f_{j,s,s},\quad \varphi_{n,s,s-1}=\sum_{i=1}^n\bigg(\prod_{j_1=i+1}^{n}f_{j,s,s}\bigg)f_{i_1,s,s-1}\bigg(\prod_{j_2=1}^{i_1-1}f_{j_2,s-1,s-1}\bigg),
\]
and in general, for $1\le k\le s-1$, by the expression
\[
\varphi_{n,s,k}= \sum_{\ell=1}^{s-k}\sum_{\substack{\sum_{\nu=1}^{\ell}h_\nu=s-k\\h_\nu\ge 1}}
\bigg[\sum_{1\le i_\ell<\dots<i_1\le n}\prod_{g=1}^{\ell+1}\bigg(f_{i_g,s-H_{g-1},s-H_g}\prod_{j=i_g+1}^{i_{g-1}-1}f_{j,s-H_{g-1},s-H_{g-1}}\bigg)\bigg];
\]
here $H_k=\sum_{\nu=1}^kh_\nu$, and $i_0=n+1$, $i_{\ell+1}=0$. We use the convention $f_{0,s,i}=1$.
\end{lemma}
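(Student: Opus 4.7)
The plan is to iterate the one-step recurrence $\varphi_{n,s,k}=\sum_{\ell=k}^{s}f_{n,s,\ell}\,\varphi_{n-1,\ell,k}$ all the way down to the boundary condition $\varphi_{0,r,k}=\delta_{r,k}$, and then to regroup the resulting terms by the combinatorial pattern of the intermediate indices, exactly as the lattice-paths picture mentioned in the preamble suggests.

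\textbf{Step 1 (Unrolling).} Peeling off one level at a time, one obtains, by induction on $n$,
\[
\varphi_{n,s,k}=\sum_{s=s_n\ge s_{n-1}\ge\cdots\ge s_1\ge s_0=k}\;\prod_{j=1}^{n}f_{j,s_j,s_{j-1}}.
\]
Equivalently, $\varphi_{n,s,k}$ is the sum over all non-increasing integer sequences from $s$ down to $k$ of length $n+1$, weighted by the product of the $f_{j,s_j,s_{j-1}}$ along the path. This is the weighted DAG formulation: nodes are pairs $(j,s_j)$, arrows go from $(j,s_j)$ to $(j-1,s_{j-1})$ for every $s_{j-1}\le s_j$, and $\varphi_{n,s,k}$ is the total weight of all paths from $(n,s)$ to $(0,k)$.

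\textbf{Step 2 (Encoding by descents).} Each such path is determined by its strict descents. If the path has exactly $\ell$ strict descents, encode it by the descent positions $n\ge i_1>i_2>\cdots>i_\ell\ge 1$ (those $j$ with $s_j>s_{j-1}$) together with the descent sizes $h_\nu=s_{i_\nu}-s_{i_\nu-1}\ge 1$. Since the total drop equals $s-k$, the $h_\nu$'s form a composition of $s-k$ into $\ell$ positive parts, so $0\le\ell\le s-k$. Writing $H_g=\sum_{\nu=1}^{g}h_\nu$ with $H_0=0$, and adopting the boundary conventions $i_0=n+1$ and $i_{\ell+1}=0$, the path is constant on each plateau, taking value $s_j=s-H_g$ for all $j\in[i_{g+1},\,i_g-1]$.

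\textbf{Step 3 (Assembling the weight).} I now split the product $\prod_{j=1}^{n}f_{j,s_j,s_{j-1}}$ according to whether the index $j$ is a descent position or sits in the interior of a plateau. For $1\le g\le\ell$, the descent contributes $f_{i_g,s-H_{g-1},s-H_g}$, and the plateau $[i_g+1,\,i_{g-1}-1]$ (immediately above the $g$-th descent) contributes $\prod_{j=i_g+1}^{i_{g-1}-1}f_{j,s-H_{g-1},s-H_{g-1}}$. This accounts for every $j\in[i_\ell,\,n]$. The remaining factors, coming from $j\in[1,\,i_\ell-1]$, all lie in the bottom plateau of height $k=s-H_\ell$; they are absorbed precisely by the $g=\ell+1$ term under the conventions $i_{\ell+1}=0$ and $f_{0,\cdot,\cdot}=1$, which makes the initial "descent factor" disappear and leaves the plateau product $\prod_{j=1}^{i_\ell-1}f_{j,k,k}$. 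Summing over the number of descents $\ell$, over the composition $(h_\nu)$, and over the descent positions $i_1>\cdots>i_\ell$ yields the stated general formula. The special cases $\varphi_{n,s,s}$ (empty descent set, $\ell=0$) and $\varphi_{n,s,s-1}$ (forced $\ell=1$, $h_1=1$) then drop out by direct specialization.

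\textbf{Main obstacle.} The calculation itself is elementary; the difficulty is purely bookkeeping. I must check that every weight factor $f_{j,s_j,s_{j-1}}$ for $j\in\{1,\ldots,n\}$ is included exactly once in the plateau/descent partition, and in particular that the $g=\ell+1$ block correctly handles the bottom plateau even when it is empty ($i_\ell=1$) and that the $g=1$ block degenerates correctly when $i_1=n$. Once the boundary conventions $i_0=n+1$, $i_{\ell+1}=0$ and $f_{0,\cdot,\cdot}=1$ are in place, the formula becomes uniform in $g$, and verifying this uniformity in the extremal cases is the only nontrivial check.
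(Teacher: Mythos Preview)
Your proof is correct and follows essentially the same approach as the paper: both interpret $\varphi_{n,s,k}$ as the total weight of paths from $(n,s)$ to $(0,k)$ in a weighted directed acyclic graph, classify these paths by the number, positions, and sizes of their strict descents (the paper calls them ``jumps''), and then read off the weight as a product of descent factors and plateau factors under the same boundary conventions $i_0=n+1$, $i_{\ell+1}=0$, $f_{0,\cdot,\cdot}=1$. Your Step~1 makes the unrolling of the recurrence into the path sum slightly more explicit than the paper does, but the combinatorial bijection and the weight decomposition are identical.
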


\begin{proof}
In order to derive the stated expressions we use a lattice path counting argument. 
Given a triple sequence $(f_{n,s,i})_{n\in\N_0,s\in\N,1\le i \le s}$ we consider a weighted directed acyclic graph $G=(V,E)$ with vertices $v\in V$ identified by their pair of labels $v=(k,\ell)$, for $0\le k \le n$ and $1\le \ell \le s$. The edges $e\in E$ are directed from vertices $(k,\ell)$ to $(k-1,j)$, with $1\le k \le n$, $1\le \ell \le s$ and $1\le j\le \ell$, and an edge $e=\big( (k,\ell)\to(k-1,j)\big)$ has weight
$$
w(e)=w\big( (k,\ell)\to(k-1,j)\big))=f_{k,\ell,j}.
$$
The weight of a directed path $\mathfrak{p}$ is defined as the product of the edge weights:
$$
w(\mathfrak{p})=\prod_{e\in \mathfrak{p}}w(e).
$$ 
We have a one-to-one correspondence between the recurrence relation $e_{n,s}=\sum_{\ell=1}^{s}f_{n,s,\ell} e_{n-1,\ell}$ and certain paths in the graph $G$:
We start at the vertex $(n,s)$---the source---and end at vertices $(0,k)$, $1\le k\le s$---the sinks. 
Ending at one of the sinks $(0,k)$ corresponds to reaching the initial value $x^k$.
Te coefficients $\varphi_{n,k,s}$ appearing in the expansion $e_{n,s}=\sum_{k=1}^{s}\varphi_{n,s,k}x^k$ 
are given by the sum of weights of certain paths,
\[
\varphi_{n,k,s}= \sum_{\text{Path }\mathfrak{p}:\, (n,s)\to (0,k)}w(\mathfrak{p}).
\]
In order to obtain the weights we consider refinement of the paths from $(n,s)$ to $(0,k)$ taking into account the number of changes of the second coordinate which we call jumps. We can have~$\ell$ jumps, with $1\le \ell \le s-k$, and the individual jump heights $h_\nu$.
The total height $H_\ell=\sum_{\nu=1}^\ell h_\nu$ of the jumps has to equal $s-k$ under the restriction that $h_\nu\ge 1$, $1\le \nu \le \ell$.
Moreover, as a further refinement we fix the first coordinates $\mathbf{i}=(i_1,i_2,\dots,i_\ell)$ of the steps where the jumps occur, with $1\le i_\ell<\dots<i_1\le n$. 
Given a directed path $\mathfrak{p}$ starting at $(n,s)$ and ending at $(0,k)$ with $\ell$ jumps at steps $\mathbf{i}=(i_1,i_2,\dots,i_\ell)$ of heights $h_\nu\ge 1$, with $\sum_{\nu=1}^{\ell}h_\nu=s-k$,
the weight of such a path $\mathfrak{p}$ is given by 
\[
w(\mathfrak{p})=\prod_{g=1}^{\ell+1}\bigg(f_{i_g,s-H_{g-1},s-H_g}\prod_{j=i_g+1}^{i_{g-1}-1}f_{j,s-H_{g-1},s-H_{g-1}}\bigg).
\]
Summing over all paths $\mathfrak{p}$---taking into account all possible first coordinates $\mathbf{i}=(i_1,i_2,\dots,i_\ell)$ of the steps where the jumps occur and also 
the the different heights of the jumps---leads to the stated result.
\end{proof}

Next we combine both results to obtain an explicit representation of the moments. 
\begin{theorem}
\label{MOMProp2}
For triangular urn models the positive integer moments $\E[W_n^s]$ are given by 
\[
\E[W_n^s]=\sum_{k=1}^{s}\varphi_{n,s,k}W_0^k,
\]
with $\varphi_{n,s,k}$ as given in Lemma~\ref{MuliDrawsPolMomLemma2}. The limits $E_s=\lim_{n\to\infty} \E\big[\frac{W_n}{n^{\Lambda}}\big]^s$ of the positive integer moments of the normalized random variables $\tilde{W_n}/n^{\Lambda}=W_n/n^{\Lambda}$ can be expressed as 
\[
E_s=
\begin{cases}
\prod_{\ell=1}^{s^*}\frac{\Gamma(\frac{T_0+1-\ell}{\sigma})}{\Gamma(\lambda_{\ell,s})}
 \sum_{k=1}^{s}\tilde{\varphi}_{s,k}W_0^k, & \mom,\\
\frac{\Gamma(\frac{T_0}{\sigma})^s}{\prod_{\ell=1}^{s}\Gamma(\mu_{\ell,s})}
 \sum_{k=1}^{s}\tilde{\varphi}_{s,k}W_0^k,& \mor,
\end{cases}
\]
with $\tilde{\varphi}_{s,k}=\lim_{n\to\infty}\frac{\varphi_{n,s,k}}{\varphi_{n,s,s}}$. 
Moreover, $\tilde{\varphi}_{s,s}=1$ and for  $1\le k\le s-1$ the values $\tilde{\varphi}_{s,k}$ are convergent infinite sums:
\[
\tilde{\varphi}_{s,k}=\sum_{\ell=1}^{s-k}\sum_{\substack{\sum_{\nu=1}^{\ell}h_\nu=s-k\\h_\nu\ge 1}}
\bigg[\sum_{1\le i_\ell<\dots<i_1< \infty}
f_{i_1,s,s-H_1} \frac{\prod_{g=2}^{\ell+1}\bigg(f_{i_g,s-H_{g-1},s-H_g}\prod_{j=i_g+1}^{i_{g-1}-1}f_{j,s-H_{g-1},s-H_{g-1}}\bigg)}{\prod_{j=1}^{i_1}f_{j,s,s}}\bigg],
\]
with $f_{n,s,r}$ as given in Lemma~\ref{HIGHMOMlemma1}.
\end{theorem}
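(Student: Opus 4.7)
The plan is to reduce the first claim to the two combinatorial lemmas just proved, and then to derive the limiting form by letting $n\to\infty$ in the resulting finite expression.

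For the exact formula: since $a_m=0$ for triangular urns we have $\W_n=W_n$ by~\eqref{ShiftDefTriangular}, and Remark~\ref{RemarkSimplyTriangularF} tells us that $f_{n,s,0}=0$. Therefore the recurrence of Lemma~\ref{HIGHMOMlemma1} collapses to
\[
\E[W_n^s]=\sum_{r=1}^{s}f_{n,s,r}\,\E[W_{n-1}^r],\qquad n\ge 1,
\]
with initial value $\E[W_0^s]=W_0^s$. This is precisely the abstract recurrence of Lemma~\ref{MuliDrawsPolMomLemma1} with the indeterminate $x$ set equal to $W_0$. Combined with the explicit description in Lemma~\ref{MuliDrawsPolMomLemma2}, this yields the asserted formula $\E[W_n^s]=\sum_{k=1}^{s}\varphi_{n,s,k}W_0^k$.

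For the limit $E_s$, I would pull out the leading coefficient, writing
\[
\frac{\E[W_n^s]}{n^{s\Lambda}}=\frac{\varphi_{n,s,s}}{n^{s\Lambda}}\sum_{k=1}^{s}\frac{\varphi_{n,s,k}}{\varphi_{n,s,s}}\,W_0^k.
\]
Since $\varphi_{n,s,s}=\prod_{j=1}^{n}f_{j,s,s}$, the asymptotic expansion established in the proof of Theorem~\ref{TheShiftMoms} shows that $\varphi_{n,s,s}/n^{s\Lambda}$ converges to $\prod_{\ell=1}^{s^*}\Gamma(\frac{T_0+1-\ell}{\sigma})/\Gamma(\lambda_{\ell,s})$ under \momp and to $\Gamma^s(\frac{T_0}{\sigma})/\prod_{\ell=1}^{s}\Gamma(\mu_{\ell,s})$ under \morp. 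The equality $\tilde{\varphi}_{s,s}=1$ is immediate from the definition, so the entire task reduces to identifying $\tilde\varphi_{s,k}$, $1\le k\le s-1$, as the stated nested infinite sum.

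The key observation is that the $g=1$ block in Lemma~\ref{MuliDrawsPolMomLemma2} factors as $f_{i_1,s,s-H_1}\prod_{j=i_1+1}^{n}f_{j,s,s}$, using $H_0=0$ and $i_0=n+1$. Cancelling the product $\prod_{j=i_1+1}^{n}f_{j,s,s}$ against the matching portion of $\varphi_{n,s,s}$ leaves only $\prod_{j=1}^{i_1}f_{j,s,s}$ in the denominator, while every remaining factor (coming from $g\ge 2$) is independent of $n$. The ratio $\varphi_{n,s,k}/\varphi_{n,s,s}$ therefore agrees termwise with the announced formula except that $i_1$ is bounded by $n$ instead of ranging to infinity. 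Taking $n\to\infty$ then gives the result provided the multi-indexed sum converges absolutely; this is where the estimates collected during the proof of Theorem~\ref{TheShiftMoms} come in, namely $f_{n,s,r}=\mathcal{O}(1/n)$ for $r<s$ in the triangular case and the lower bound $\prod_{j=1}^{n}f_{j,s,s}\ge c_s n^{s\Lambda}$. Together these force each summand to be $\mathcal{O}(i_1^{-s\Lambda-1}\prod_{g\ge 2} i_g^{-1})$, a convergent majorant that justifies both the absolute convergence of the infinite sum and the passage to the limit inside it.

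The main obstacle is the last step: the cancellation itself is straightforward, but one has to verify carefully that the convention $f_{0,s,i}=1$ and the termination at $i_{\ell+1}=0$ are preserved under the limit, and that the majorant decays summably in \emph{each} of the $\ell$ inner indices simultaneously. Everything else is direct substitution into the structural results already in hand.
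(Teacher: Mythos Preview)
Your proof follows the same route as the paper's: reduce the exact formula to Lemmas~\ref{MuliDrawsPolMomLemma1}--\ref{MuliDrawsPolMomLemma2} via $f_{n,s,0}=0$, factor out $\varphi_{n,s,s}=\prod_j f_{j,s,s}$, invoke its asymptotics from the proof of Theorem~\ref{TheShiftMoms}, and cancel the $g=1$ block against the denominator to obtain the infinite-sum form of $\tilde\varphi_{s,k}$.

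One caveat on your convergence step: the majorant $O\bigl(i_1^{-s\Lambda-1}\prod_{g\ge 2}i_g^{-1}\bigr)$ is not a valid upper bound, because the inner products $\prod_{j=i_g+1}^{i_{g-1}-1}f_{j,s-H_{g-1},s-H_{g-1}}$ in the numerator grow like $(i_{g-1}/i_g)^{(s-H_{g-1})\Lambda}$ and cannot simply be absorbed into the $O(1/i_g)$ jump factors. If you track the powers of each $i_g$ through the telescoping, the correct bound on a summand is $O\bigl(\prod_{g=1}^{\ell} i_g^{-1-h_g\Lambda}\bigr)$, which is still summable over $1\le i_\ell<\cdots<i_1<\infty$ since each exponent $1+h_g\Lambda>1$. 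The paper sidesteps this direct estimate entirely: it cites the existence of $E_s$ from Theorem~\ref{TheShiftMoms}, so that after factoring out the convergent prefactor $n^{-s\Lambda}\varphi_{n,s,s}$ the remaining polynomial in $W_0$ must converge coefficientwise, and then simply reads off the limiting form of each $\tilde\varphi_{s,k}$ from the cancellation you describe.
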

\begin{proof}
By Lemma~\ref{HIGHMOMlemma1} and \eqref{ShiftDefTriangular} the moments $\E[W_n^s]=\E[\tilde{W}_n^s]$ satisfy a recurrence relation of type discussed in Lemmas~\ref{MuliDrawsPolMomLemma1} and~\ref{MuliDrawsPolMomLemma2}, 
such that $e_{n,s}=\E[W_n^s]$, $f_{n,s,i}$ as given in Lemma~\ref{HIGHMOMlemma1}, see also Remark~\ref{RemarkSimplyTriangularF}, with initial value $x=W_0$. 
This leads to the first part of the stated result. We already know from Theorem~\ref{TheShiftMoms} that the $\lim_{n\to\infty} \E[\big(\frac{W_n}{n^{\Lambda}}\big)^s]$ exists, for $s\ge 1$. 
We also discern from the proof of Theorem~\ref{TheShiftMoms} that 
the limit 
$$
\lim_{n\to\infty}n^{-\Lambda s}\varphi_{n,s,s}=\lim_{n\to\infty}n^{-\Lambda s}\prod_{j=1}^{n}f_{j,s,s}
$$ 
exists and is a quotient of products of Gamma functions, as stated above. We factor out $\varphi_{n,s,s}$, such that
$n^{-\Lambda s}\varphi_{n,s,k}=n^{-\Lambda s}\varphi_{n,s,s}\frac{\varphi_{n,s,k}}{\varphi_{n,s,s}}$, 
and separate the factor corresponding to $g=1$ from the product 
$$\prod_{g=1}^{\ell+1}\Bigl(f_{i_g,s-H_{g-1},s-H_g}\prod_{j=i_g+1}^{i_{g-1}-1}f_{j,s-H_{g-1},s-H_{g-1}}\Bigr).$$
It follows that the nested infinite sums exist. They all share as a common factor
the quotient of products of Gamma functions. This proves the stated form of the common factor and also that
$\tilde{\varphi}_{s,s}=1$, leading to the stated result.
\end{proof}
An alternative more compact representation can be obtained by considering the discrete simplexes $$\Delta_k^n=\{\mathbf{c}=(c_1,\dots,c_n): c_i\ge 0, 1\le i\le n, \sum_{i=1}^{n}c_i=k\}.$$
Given $\mathbf{c}\in \Delta_{s-k}^n$ we interpret $c_\ell$ as the changes of the second label passing from a node with first label $\ell$ to $\ell-1$. 
Let the edges $e_\ell=e_\ell(\mathbf{c})$ be defined by 
$$e_\ell=\big((\ell,s-C_{\ell+1})\to(\ell,s-C_{\ell})\big),$$
for $1\le \ell \le n$ with $C_j=\sum_{i=j}^{n}c_i$. Then, a path $\mathfrak{p}=\mathfrak{p}(\mathbf{c})$ from $(n,s)$ to $(0,k)$ can be obtained by
$$\mathfrak{p}=(e_n,e_{n-1},\dots,e_1).$$
We extend the definition of the weight function $w$ to elements $\mathbf{c}\in\Delta_k^n$ by 
$$
w(\mathbf{c})=w(\mathfrak{p}(\mathbf{c}))=\prod_{j=1}^{n}f_{j,s-C_{j+1},s-C_j}.
$$
Consequently, the coefficients $\varphi_{n,s,k}$ can be alternatively written as
$$
\varphi_{n,s,k}=\sum_{\text{Path }\mathfrak{p}:\, (n,s)\to (0,k)}w(\mathfrak{p})=\sum_{\mathbf{c}\in\Delta_{s-k}^n}w(\mathbf{c}).
$$
This implies that
\[
e_{n,s}=\sum_{k=1}^{s}x^k\sum_{\mathbf{c}\in\Delta_{s-k}^n}w(\mathbf{c})
=\sum_{k=1}^{s}x^k\sum_{\mathbf{c}\in\Delta_{s-k}^n}\prod_{j=1}^{n}f_{j,s-C_{j+1},s-C_j}.
\]
Concerning the limits $\tilde{\varphi}_{s,k}=\lim_{n\to\infty}\frac{\varphi_{n,s,k}}{\varphi_{n,s,s}}$ appearing in $E_s$
we obtain an we readily obtain the alternative expressions:
$$\tilde{\varphi}_{s,k}=\sum_{\mathbf{c}\in\Delta_{s-k}^\infty}\prod_{j=1}^{\infty}\frac{f_{j,s-C_{j+1},s-C_j}}{f_{j,s,s}}.$$
\subsection{Large-index urns}
In order to extend the explicit results for $\E[\tilde{W}_n^s]$ to large-index urns we require a direct extension of Lemma~\ref{MuliDrawsPolMomLemma1}.
\begin{lemma}
\label{LargeLemma1}
Let the sequence $(e_{n,s})_{n\ge 0,s\ge 0}$ be defined by the recurrence relation 
\[
e_{n,s}=\sum_{i=0}^{s}f_{n,s,i} e_{n-1,i},\quad n\ge 1,s\ge 1,
\]
with initial values $e_{0,s}=x^s$, $s\in\N_0$ and $e_{n,0}=1$, $n\in\N_0$, for a given triple sequence $(f_{n,s,i})_{n\in\N_0,s\in\N_0,0\le i \le s}$. 
Then $e_{n,s}=\sum_{k=0}^{s}\varphi_{n,s,k}x^k$ is a polynomial in the variable $x$ of degree $s$ with $\varphi_{n,s,k}$ defined for $n\ge1$, $s\ge 1$ and $0\le k\le s$ by the recurrence relation $\varphi_{n,s,k}=\sum_{\ell=k}^s f_{n,s,\ell} \varphi_{n-1,\ell,k}$, and $\varphi_{n,0,k}=\delta_{k,0}$. 
\end{lemma}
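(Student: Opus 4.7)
The plan is to mimic the proof of Lemma~\ref{MuliDrawsPolMomLemma1} essentially verbatim, adjusting only for the widened index range (starting at $i=0$ and $k=0$) and the added boundary prescription $e_{n,0}=1$. The argument is induction on $n$, with the base case and inductive step running in parallel to the triangular case.

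For the base case $n=0$, the initial condition $e_{0,s}=x^s$ forces $\varphi_{0,s,k}=\delta_{s,k}$, so that $\sum_{k=0}^{s}\varphi_{0,s,k}x^{k}=x^{s}$ trivially. The additional boundary at $s=0$ is handled by the stipulation $\varphi_{n,0,k}=\delta_{k,0}$, which is consistent with $e_{n,0}=1=\sum_{k=0}^{0}\varphi_{n,0,k}x^{k}$ for every $n\ge 0$. These two families of initial values provide all the data needed to launch the double recursion in $n$ and $s$.

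For the inductive step, suppose the expansion $e_{n-1,i}=\sum_{k=0}^{i}\varphi_{n-1,i,k}x^{k}$ holds for every $i\ge 0$. Substituting into the defining recurrence for $e_{n,s}$ and interchanging the order of summation gives
\[
e_{n,s}=\sum_{i=0}^{s}f_{n,s,i}\sum_{k=0}^{i}\varphi_{n-1,i,k}x^{k}
=\sum_{k=0}^{s}x^{k}\sum_{i=k}^{s}f_{n,s,i}\,\varphi_{n-1,i,k}.
\]
Since the $x^{k}$ are linearly independent, comparison of coefficients yields $\varphi_{n,s,k}=\sum_{\ell=k}^{s}f_{n,s,\ell}\,\varphi_{n-1,\ell,k}$, which is precisely the recurrence asserted in the lemma. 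This proves that $e_{n,s}$ is a polynomial of degree at most $s$ in $x$ with the claimed coefficients.

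I expect no real obstacle: the only delicate point compared with Lemma~\ref{MuliDrawsPolMomLemma1} is that the sum now includes the $i=0$ (resp.\ $k=0$) term, so the polynomial may carry a constant contribution. One must simply verify that the two boundary prescriptions $e_{0,s}=x^{s}$ ($s\ge 1$) and $e_{n,0}=1$ ($n\ge 0$) are mutually consistent at the corner $(n,s)=(0,0)$ (both give $1$), and that the recurrence for $\varphi_{n,s,k}$ correctly reproduces $\varphi_{n,s,0}$ from the admitted values $\varphi_{n-1,\ell,0}$. The double induction on $n$ then closes immediately.
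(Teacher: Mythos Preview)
Your proposal is correct and matches the paper's own approach: the paper explicitly states that the proof is identical to that of Lemma~\ref{MuliDrawsPolMomLemma1} and skips it, and you have carried out precisely that induction, merely extending the summation to include $i=0$ and verifying the boundary consistency at $s=0$.
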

%Note that $\varphi_{0,s,k}=\delta_{s,k}$ and  $\varphi_{n,0,0}=1$.

The proof of the result above is identical to Lemma~\ref{MuliDrawsPolMomLemma1} and is therefore skipped. 
Note that by the fact $e_{n,0}=1$ and $\varphi_{n,0,0}=1$ the constant term $\varphi_{n,s,0}$ satisfies
\[
\varphi_{n,s,0}=\sum_{\ell=0}^s f_{n,s,\ell} \varphi_{n-1,\ell,0}
=\sum_{\ell=1}^s f_{n,s,\ell} \varphi_{n-1,\ell,0} + f_{n,s,0}.
\]

The explicit expressions for $\varphi_{n,s,k}$, $1\le k\le s$ and $s\ge 1$ are identical to Lemma~\ref{MuliDrawsPolMomLemma2}.
What remains is to obtain an expression for the constant term $\varphi_{n,s,0}$. 

\begin{lemma}
\label{LargeLemma2}
Let the sequence $(e_{n,s})_{n\ge 0,s\ge 1}$ be defined as in Lemma~\ref{LargeLemma1}.
In the expansion $e_{n,s}=\sum_{k=0}^{s}\varphi_{n,s,k}x^k$ the coefficients $\varphi_{n,s,k}$ are for $1\le k\le s$ as explicitly stated in Lemma~\ref{MuliDrawsPolMomLemma2}.
For $k=0$ the constant term $\varphi_{n,s,0}$ is given by 
\[
\varphi_{n,s,0}=\sum_{k=1}^{n}\sum_{\ell=1}^{s} %
\sum_{r=1}^{s-\ell}\sum_{\substack{\sum_{\nu=1}^{r}h_\nu=s-\ell\\h_\nu\ge 1}}
\bigg[\sum_{k+1\le i_r<\dots<i_1\le n}\prod_{g=1}^{r+1}\bigg(f_{i_g,s-H_{g-1},s-H_g}\prod_{j=i_g+1}^{i_{g-1}-1}f_{j,s-H_{g-1},s-H_{g-1}}\bigg)\bigg];
\]
here $H_j=\sum_{\nu=1}^j h_\nu$, and $i_0=n+1$, $i_{r+1}=k$, with $H_{r+1}=s$.
\end{lemma}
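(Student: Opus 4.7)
The plan is to extend the lattice-path counting argument from the proof of Lemma \ref{MuliDrawsPolMomLemma2} to accommodate the additional initial condition $e_{n,0}=1$. I would set up the same weighted DAG $G=(V,E)$ on the vertex set $\{(k,\ell):0\le k\le n,\ 0\le\ell\le s\}$, with directed edges $(k,\ell)\to(k-1,j)$ of weight $f_{k,\ell,j}$, but now indexed by $0\le j\le\ell$ so that dropping to level $0$ is allowed. The previous path-counting bijection for $\varphi_{n,s,k}$ with $1\le k\le s$ is unchanged, because such paths never touch level $0$; this is exactly why the formulas from Lemma \ref{MuliDrawsPolMomLemma2} continue to hold in this extended setting.

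For the constant term, iterating the one-step identity
\[
\varphi_{n,s,0}=\sum_{\ell=1}^{s} f_{n,s,\ell}\,\varphi_{n-1,\ell,0}+f_{n,s,0}
\]
expresses $\varphi_{n,s,0}$ as a sum of weights of paths that start at the source $(n,s)$, stay at levels $\ge 1$ during an initial segment, and then terminate via a \emph{distinguished final edge} $(k,\ell)\to(k-1,0)$ of weight $f_{k,\ell,0}$; the initial condition $e_{k-1,0}=1$ means that no further contribution is picked up after reaching level $0$. I would parameterize these paths by the first coordinate $k\in\{1,\dots,n\}$ of the terminal edge, the pre-terminal level $\ell\in\{1,\dots,s\}$, the number $r$ of earlier strict level drops, their heights $h_1,\dots,h_r\ge 1$ with $\sum_\nu h_\nu=s-\ell$, and their positions $n\ge i_1>\dots>i_r>k$. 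Applying the bookkeeping of Lemma \ref{MuliDrawsPolMomLemma2} to the initial segment from $(n,s)$ down to $(k,\ell)$ and then appending the terminal edge --- adopting the conventions $i_0=n+1$, $i_{r+1}=k$ and $H_{r+1}=s$ so that the terminal jump is encoded as the $(r{+}1)$-st one, of height $\ell$ --- each such path contributes precisely the product
\[
\prod_{g=1}^{r+1}\bigg(f_{i_g,s-H_{g-1},s-H_g}\prod_{j=i_g+1}^{i_{g-1}-1} f_{j,s-H_{g-1},s-H_{g-1}}\bigg),
\]
which is exactly the summand displayed in the lemma.

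To finish, I would either invoke the bijection between contributing paths and parameter tuples directly, or verify the closed form by induction on $n$: substituting the proposed expression into the one-step recurrence above and splitting contributing paths according to whether the terminal edge sits at first coordinate $n$ (producing exactly the isolated summand $f_{n,s,0}$, matching the degenerate case $k=n$ with an empty initial segment) or at some $k<n$ (where a reindexing $n\mapsto n-1$ identifies the remaining contribution with $\sum_{\ell\ge 1} f_{n,s,\ell}\,\varphi_{n-1,\ell,0}$). I expect the sole obstacle to be notational bookkeeping --- making sure the conventions on $H_{r+1}$, $i_0$, and $i_{r+1}$ correctly produce the terminal-edge factor $f_{k,\ell,0}$ together with the horizontal stretches flanking it --- since the heavy lifting was already carried out in the proof of Lemma \ref{MuliDrawsPolMomLemma2}.
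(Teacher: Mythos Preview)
Your proposal is correct and follows essentially the same route as the paper: the paper likewise extends the weighted DAG of Lemma~\ref{MuliDrawsPolMomLemma2} by admitting edges down to level $0$, identifies $\varphi_{n,s,0}$ as $\sum_{k=1}^{n}\sum_{\ell=1}^{s} f_{k,\ell,0}\sum_{\mathfrak p:(n,s)\to(k,\ell)} w(\mathfrak p)$, and then encodes the terminal drop via the conventions $i_{r+1}=k$, $H_{r+1}=s$ to obtain the displayed product. Your optional inductive verification is an extra sanity check not present in the paper, but the core argument is identical.
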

\begin{proof}
In order to derive the stated expression we use again lattice path counting argument. 
Given a triple sequence $(f_{n,s,i})_{n\in\N_0,s\in\N,0\le i \le s}$ we consider a weighted directed acyclic graph $G=(V,E)$ with vertices $v\in V$ identified by their pair of labels $v=(k,\ell)$, for $0\le k \le n$ and $0\le \ell \le s$. The edges $e\in E$ are directed from vertices $(k,\ell)$ to $(k-1,j)$, with $1\le k \le n$, $1\le \ell \le s$ and $0\le j\le \ell$.
The main difference to the triangular urn models is the appearance of edges $(k,\ell)$ to $(k-1,0)$, $1\le k\le n$ and $1\le \ell \le s$, which will contribute to constant term $\varphi_{n,s,0}$.
We have a one-to-one correspondence between the recurrence relation $e_{n,s}=\sum_{\ell=0}^{s}f_{n,s,\ell} e_{n-1,\ell}$ and certain paths in the graph $G$:
We start at the vertex $(n,s)$---the source---and end at vertices $(0,\ell)$, $1\le \ell \le s$, or $(k,0)$, $0\le k\le n-1$---the sinks. 
Ending at one of the sinks $(0,k)$ corresponds to reaching the initial value $x^k$ and ending at $(k,0)$ with $0\le k\le n-1$ corresponds to reaching the initial value $1=x^0$ and contributes to the constant term.
The coefficients $\varphi_{n,k,s}$ are for $1\le k\le s$ identical to the triangular urn models
\[
\varphi_{n,k,s}= \sum_{\text{Path }\mathfrak{p}:\, (n,s)\to (0,k)}w(\mathfrak{p}).
\]
The coefficient $\varphi_{n,0,s}$ is given by the sum of weights of all path ending at one of the sinks $(k,0)$, $0\le k\le n-1$:
\[
\varphi_{n,0,s}= \sum_{k=0}^{n-1}\sum_{\text{Path }\mathfrak{p}:\, (n,s)\to (k,0)}w(\mathfrak{p}).
\]
The weight of such path are obtained by following a path $\mathfrak{p}$ from $(n,s)$ to an arbitrary  vertex $(k,\ell)$, and then end at a sink $(k-1,0)$ via the edge weighted by $f_{k,\ell,0}$, $1\le k\le n$ and $1\le \ell \le s$:
\[
\varphi_{n,s,0}=\sum_{k=1}^{n}\sum_{\ell=1}^{s} f_{k,\ell,0} \sum_{\text{Path }\mathfrak{p}:\, (n,s)\to (k,\ell)}w(\mathfrak{p}).
\]
The weight of such path can be calculated as for $\varphi_{n,s,k}$ by taking into account the position (first coordinate) of the jumps 
at $k+1\le i_r<\dots<i_1\le n$, and their heights $h_1,\dots,h_r$.
The weight of such a directed path $\mathfrak{p}$ times the weight of the last step $f_{k,\ell,0}$ is given by
\[
 f_{k,\ell,0} \cdot w(\mathfrak{p})=\prod_{g=1}^{r+1}\bigg(f_{i_g,s-H_{g-1},s-H_g}\prod_{j=i_g+1}^{i_{g-1}-1}f_{j,s-H_{g-1},s-H_{g-1}}\bigg),
\]
where we have used the convention $i_0=n+1$, $i_{r+1}=k$, with $H_{r+1}=s$. Taking into account all such path leads to the stated result. 
\end{proof}
\begin{theorem}
\label{MOMProp3}
For large-index urns the positive integer moments $\E[\tilde{W}_n^s]$ of the shifted random variable $\tilde{W}_n=W_n-\frac{n a_m}{1-\Lambda}$ are given by 
\[
\E[\tilde{W}_n^s]=\sum_{k=0}^{s}\varphi_{n,s,k}W_0^k,
\]
with $\varphi_{n,s,k}$ for $1\le k\le s$ as given in Lemma~\ref{MuliDrawsPolMomLemma2} and $\varphi_{n,s,0}$ as given in Lemma~\ref{LargeLemma2}.
The limits $E_s=\lim_{n\to\infty} \E\big[\frac{\tilde{W}_n}{n^{\Lambda}}\big]^s$ can be expressed as 
\[
E_s=
\begin{cases}
\prod_{\ell=1}^{s^*}\frac{\Gamma(\frac{T_0+1-\ell}{\sigma})}{\Gamma(\lambda_{\ell,s})}
 \sum_{k=0}^{s}\tilde{\varphi}_{s,k}W_0^k, & \mom,\\
\frac{\Gamma(\frac{T_0}{\sigma})^s}{\prod_{\ell=1}^{s}\Gamma(\mu_{\ell,s})}
 \sum_{k=0}^{s}\tilde{\varphi}_{s,k}W_0^k,& \mor,
\end{cases}
\]
with $\tilde{\varphi}_{s,k}$ for $1\le k\le s$ as stated in Theorem~\ref{MOMProp2}. 
Moreover, $\tilde{\varphi}_{s,0}$ is a convergent infinite sum:
\[
\tilde{\varphi}_{s,0}=\sum_{k=1}^{\infty}\sum_{\ell=1}^{s} %
\sum_{r=1}^{s-\ell}\sum_{\substack{\sum_{\nu=1}^{r}h_\nu=s-\ell\\h_\nu\ge 1}}
\bigg[\sum_{k+1\le i_r<\dots<i_1<\infty}\frac{\prod_{g=2}^{r+1}\bigg(f_{i_g,s-H_{g-1},s-H_g}\prod_{j=i_g+1}^{i_{g-1}-1}f_{j,s-H_{g-1},s-H_{g-1}}\bigg)}{\prod_{j=1}^{i_1}f_{j,s,s}}\bigg];
\]
\end{theorem}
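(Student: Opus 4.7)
The plan is to mirror the proof of Theorem~\ref{MOMProp2}, but working with the full recurrence of Lemma~\ref{HIGHMOMlemma1},
\[
\E[\W_n^s]=\sum_{r=0}^{s}f_{n,s,r}\,\E[\W_{n-1}^r],
\]
which now genuinely involves the zero-index term $f_{n,s,0}$ because $a_m\neq 0$ in the large-index regime. Together with $\E[\W_0^s]=W_0^s$ and $\E[\W_n^0]=1$, this fits the setup of Lemma~\ref{LargeLemma1} with $x=W_0$.

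First I would apply Lemma~\ref{LargeLemma1} to obtain $\E[\W_n^s]=\sum_{k=0}^{s}\varphi_{n,s,k}W_0^k$. The coefficients for $1\le k\le s$ are identical to the triangular case described in Lemma~\ref{MuliDrawsPolMomLemma2}: in the lattice-path picture, a path from the source $(n,s)$ to a sink $(0,k)$ with $k\ge 1$ never visits a vertex $(j,0)$, so the extra edges available in the large-index graph play no role. The constant term $\varphi_{n,s,0}$ collects the weights of all paths terminating at a sink $(k,0)$ with $0\le k\le n-1$ via a drop-to-zero edge of weight $f_{k+1,\ell,0}$, and its explicit form is given by Lemma~\ref{LargeLemma2}. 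This establishes the exact representation of $\E[\W_n^s]$.

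For the limiting statement I would write
\[
\frac{\E[\W_n^s]}{n^{s\Lambda}}=\frac{\varphi_{n,s,s}}{n^{s\Lambda}}\sum_{k=0}^{s}\frac{\varphi_{n,s,k}}{\varphi_{n,s,s}}\,W_0^k
\]
and use the asymptotic expansion of $\prod_{j=1}^{n}f_{j,s,s}$ derived in the proof of Theorem~\ref{TheShiftMoms}, which produces exactly the Gamma-function prefactors stated for \momp and \morp. Since $E_s$ exists by Theorem~\ref{TheShiftMoms} and $W_0$ is a free polynomial parameter, each normalized coefficient $\varphi_{n,s,k}/\varphi_{n,s,s}$ must converge to a limit $\tilde{\varphi}_{s,k}$; for $1\le k\le s$ the explicit form of these limits is obtained verbatim from Theorem~\ref{MOMProp2} by factoring $\prod_{j=1}^{n}f_{j,s,s}$ out of the lattice-path sum and passing to the limit in each fixed finite block.

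The main obstacle is to identify $\tilde{\varphi}_{s,0}$ as a convergent nested infinite sum and to justify the interchange of limits with the nested summation. In addition to the interior jump positions $i_r<\dots<i_1$ one now has an outer summation over the terminal index $k$. After the usual factoring, each summand is of size $O(1)\cdot \prod_{j=1}^{i_1}f_{j,s,s}^{-1}=O(i_1^{-s\Lambda})$, which is absolutely summable in $i_1$ because $\Lambda>\tfrac12$; summability over $k$ is forced by the refined $O(1/n)$-type bounds on $f_{n,s,r}$ for $r<s$ that were already established in the proof of Theorem~\ref{TheShiftMoms}, controlling the tail contribution of the drop-to-zero step. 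With these estimates in hand one may apply dominated convergence to pass termwise to the limit in the expression of Lemma~\ref{LargeLemma2}, yielding the stated formula for $\tilde{\varphi}_{s,0}$ and completing the proof.
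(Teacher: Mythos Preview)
Your proposal is correct and follows precisely the route the paper intends: the paper does not spell out a proof of Theorem~\ref{MOMProp3} at all, but sets it up so that the argument is the obvious analogue of the proof of Theorem~\ref{MOMProp2}, with Lemmas~\ref{LargeLemma1} and~\ref{LargeLemma2} replacing Lemmas~\ref{MuliDrawsPolMomLemma1} and~\ref{MuliDrawsPolMomLemma2}, and with the asymptotics of $\prod_{j=1}^n f_{j,s,s}$ and the existence of $E_s$ imported from Theorem~\ref{TheShiftMoms}. Your treatment of the new constant term $\tilde{\varphi}_{s,0}$ via the drop-to-zero edges and the $O(1/n)$ bounds on $f_{n,s,r}$ from the proof of Theorem~\ref{TheShiftMoms} is exactly what is needed; the one cosmetic remark is that the ``$W_0$ is a free polynomial parameter'' shortcut for deducing convergence of each $\varphi_{n,s,k}/\varphi_{n,s,s}$ requires that sufficiently many integer values of $W_0$ are admissible for fixed $T_0$, so it is cleaner to rely directly on the dominated-convergence argument you already give.
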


\section{Conclusion and Outlook}
\subsection{Summary}
We studied for a part I and II two-color affine linear urn models with multiple drawings---sample size $m\ge 1$---under two sampling models the distribution of the number of white balls $W_n$ after $n$ draws. Concerning the distribution of the number of white balls $W_n$ we obtained several limit laws summarized in the Theorem below.
\begin{theorem}[Limit laws for affine balanced two-color urn models]
\label{TheFolklore2}
For a balanced two-color affine urn model with sample size $m\ge1$, let $\Lambda$ denote the ratio of the two eigenvalues of $M$. 
\begin{enumerate}
	\item Small-index urns, the case $\Lambda\le\frac12$: $\frac{W_n-\E(W_n)}{\sqrt{\V(W_n)}}\claw \mathcal{N}(0,1)$. 
	\item Large-index urns, the case $\Lambda>\frac12$: $\frac{W_n-\E(W_n)}{n^{\Lambda}}\as L$.
	\item Triangular urns, the case $b_0\cdot a_1=0$: $\frac{W_n}{n^{\Lambda}}\as T$. 
\end{enumerate}
For large-index and triangular urn models we have convergence of all positive integer moments. 
\end{theorem}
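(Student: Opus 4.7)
The plan is to assemble the statement from results already proved, deducing (1) from Part~I and (2)--(3), together with the moment-convergence claim, from the almost-sure convergence of Part~I combined with the moment analysis of Sections~3 and~4 of the present paper.

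Statements (1)--(3) themselves are essentially restatements of limit laws from Part~I. For (1) I would cite the martingale central limit theorem applied to the increments of $\calW_n=g_n(W_n-\E[W_n])$, using the conditional-variance expansion developed for small index $\Lambda\le\tfrac12$. For (2) and (3) the almost-sure convergence follows from Lemma~\ref{Prop:caligraphicW}: $\calW_n$ is an $L_2$-bounded martingale when $\Lambda>\tfrac12$, and $\mathfrak{W}_n=g_n W_n$ is a nonnegative martingale in the triangular case. Dunford--Doob's convergence theorem then delivers the a.s.\ limits $\calW_\infty$ and $\mathfrak{W}_\infty$.

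The genuinely new content is the convergence of all positive integer moments in cases (2) and (3). The plan is to upgrade a.s.\ convergence to moment convergence via uniform integrability. Concretely, by Theorem~\ref{TheShiftMoms} we have $\E[\W_n^s]\sim\kappa_s n^{s\Lambda}$ for every integer $s\ge 1$, with $\kappa_s=E_s$ given explicitly there. Since $g_n\sim C n^{-\Lambda}$ by Lemma~\ref{Prop:caligraphicW}, this yields $\E[\calW_n^s]=O(1)$ for every $s\in\N$ in the large-index case, and analogously $\E[\mathfrak{W}_n^s]=O(1)$ in the triangular case. Applying the bound at order $s+1$ gives an $L^{s+1}$-bound for $\calW_n$ (resp.\ $\mathfrak{W}_n$), which is classically sufficient for uniform integrability of the family $(|\calW_n|^s)_{n\ge 0}$ (resp.\ $(\mathfrak{W}_n^s)$). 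Combined with the a.s.\ convergence, this yields convergence of the $s$th moments, and the limits are identified with the explicit expressions of Proposition~\ref{MULIPropMom}.

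The main obstacle is organizational rather than technical: one must verify that the asymptotic bound $\E[\W_n^s]=O(n^{s\Lambda})$ transfers cleanly to $\calW_n$ after subtracting the deterministic correction $\E[W_n]-\tfrac{a_m}{1-\Lambda}n$. This is where Lemma~\ref{Prop:caligraphicW} is crucial, because it supplies the precise asymptotics of both $g_n$ and $\E[W_n]$, showing that the correction contributes only $O(1)$ to $\calW_n$ so that $\E[\calW_n^{s+1}]$ is indeed bounded. Once this is in place, the remaining assembly---uniform integrability plus Proposition~\ref{MULIPropMom} to convert the $\W_n$-moments into the $\calW_\infty$- and $\mathfrak{W}_\infty$-moments---is routine and completes the proof.
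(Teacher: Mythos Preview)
Your proposal is correct and follows essentially the same assembly as the paper: statements (1)--(3) are cited from Part~I, and the moment convergence for large-index and triangular urns is deduced from Theorem~\ref{TheShiftMoms} together with Proposition~\ref{MULIPropMom}. The paper additionally invokes Theorems~\ref{MOMProp2} and~\ref{MOMProp3} for the explicit moment formulas, but these are not strictly needed for the bare convergence claim, so your choice of Theorem~\ref{TheShiftMoms} alone is adequate and slightly more economical.

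One point where you are in fact more careful than the paper: you make the uniform-integrability step explicit. The paper's Proposition~\ref{MULIPropMom} computes $\lim_{n\to\infty}\E[\calW_n^s]$ and declares this to be $\E[\calW_\infty^s]$, but the identification of the limit of moments with the moments of the almost-sure limit is not justified there; your $L^{s+1}$-boundedness argument supplies exactly this missing link. A small refinement: since Theorem~\ref{TheShiftMoms} controls the \emph{signed} moments $\E[\W_n^s]$, to get $\sup_n\E[|\calW_n|^{p}]<\infty$ you should take $p$ to be an even integer strictly larger than $s$ (say $p=2\lceil (s+1)/2\rceil$), so that $\E[\W_n^{p}]=\E[|\W_n|^{p}]$ and the bound transfers directly. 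With that adjustment the argument is complete.
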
 
The central limit theorem for small-index urn is obtained in part I of Theorem 3. 
The random variables $L$ and $T$ are variants of the almost-sure limits $\mathcal{W}_\infty$ and $\mathfrak{W}_\infty$ and their almost-sure convergence can be easily deduced from part I of Theorem 2 and Proposition 4. The convergence of the moments follows directly from Proposition~\ref{MULIPropMom} and Theorems~\ref{MOMProp2} and~\ref{MOMProp3}.

\subsection{Open problems and extensions}
A natural question is to extend the results to nonaffine linear urn models with multiple drawings, both balanced and unbalanced, and to extend the general limit theorems for $m=1$ of Janson~\cite{Jan2004,Jan2006}. Moreover, it is also of interest to extend the analytic combinatorial framework developed for sample size $m=1$ by Flajolet et al.~\cite{FlaDumPuy2006,FlaGabPek2005} and Morcrette~\cite{Morcrette2013} to urn models with multiple drawings, both balanced and unbalanced to obtain precise information about $W_n$ for fixed $n$. Another important question is the study 
of the limit law of triangular urn models and large-index urns for $m>1$ similar to the case $m=1$, which is now well understood~\cite{Chauvin1,Chauvin2,NeiningerKnape} in terms of fixed-point equations (smoothing transforms). Does there exist a simpler expression for the moments?

\smallskip

The methods applied in this work do not seem to be easily adapted to unbalanced urn models. See for example~\cite{Aguech2014} for a first step towards the analysis of such models. However, extensions to more than two colors and to certain classes of nonaffine models can be obtained. For example, one can readily generalize the affine linearity condition~\eqref{MuliDrawsLinPropLinear} and the martingale structure of Proposition~\ref{Prop:caligraphicW} to balanced urn models with $r\ge 2$ colors. Moreover, using the correspondence between recurrence relations for integer moments and weighted directed acyclic graphs it seems possible to study more general classes of two-color balanced urn models. It is possible to extend the approach of Flajolet et al.~\cite{FlaDumPuy2006} for balanced urns and of Morcrette~\cite{Morcrette2013} for unbalanced urns to derive partial differential equations for suitably defined generating functions; we comment on this elsewhere~\cite{KuMor2014+}.

\end{document}